\setlist[enumerate]{label=\roman*), ref=\theenumii.\roman*}
\newcommand{\Zomega}{\mathbb{Z}[\omega]}
\newcommand{\Zbeta}{\mathbb{Z}[\beta]}
\newcommand{\ZZ}{\mathbb{Z}}
\newcommand{\QQ}{\mathbb{Q}}
\newcommand{\CC}{\mathbb{C}}
\newcommand{\NN}{\mathbb{N}}
\newcommand{\RR}{\mathbb{R}}
\newcommand{\fin}[2][\beta]{\operatorname{Fin}_{#2}(#1)}
\newcommand{\A}{\mathcal{A}}
\newcommand{\B}{\mathcal{B}}
\newcommand{\vect}[1]{({#1}_0,{#1}_1,\cdots,{#1}_{d-1})^T}
\newcommand{\multMat}[1]{\sum_{i=0}^{d-1} {#1}_i S^i}
\newtheorem{theorem}{Theorem}[section]
\newtheorem{lemma}[theorem]{Lemma}
\newtheorem{corollary}[theorem]{Corollary}
\theoremstyle{definition}
\newtheorem{definition}[theorem]{Definition}
\renewcommand\Re[1]{\operatorname{Re}#1}
\begin{document}

\title{Minimal non-integer alphabets allowing parallel addition}

\correspondingauthor[J. Legersk\'y]{Jan Legersk\'y}{risc,fjfi}{jan.legersky@risc.jku.at}

\institution{risc}{Research Institute for Symbolic Computation, Johannes Kepler University \\ Altenbergerstra{\ss}e 69, A-4040 Linz, Austria}
\institution{fjfi}{Faculty of Nuclear Sciences and Physical Engineering, Czech Technical University in Prague\\
Trojanova 13, 120 00 Praha 2, Czech Republic}

\begin{abstract}
Parallel addition, i.e., addition with limited carry propagation has been so far studied for complex bases and integer alphabets.
We focus on alphabets consisting of integer combinations of powers of the base.
We give necessary conditions on the alphabet allowing parallel addition.
Under certain assumptions, we prove the same lower bound on the size of the generalized alphabet that is known for alphabets consisting of consecutive integers.
We also extend the characterization of bases allowing parallel addition to numeration systems with non-integer alphabets.
\end{abstract}

\keywords{numeration system, parallel addition, minimal alphabet}

\maketitle

\section{Introduction}
The concept of parallel addition in a numeration system with a base $\beta$ and alphabet~$\A$ was introduced by A. Avizienis \cite{avizienis}.
The crucial difference from standard addition is that carry propagation is limited and hence an output digit depends only on bounded number of input digits.
Therefore, the whole operation can run in constant time in parallel.

It is know that the alphabet $\A$ must be redundant~\cite{kornerup}, otherwise parallel addition is not possible.
Necessary conditions on the base and alphabet were further studied by C. Frougny, P. Heller, E. Pelantov\'a, and M. Svobodov\'a \cite{kBlock,minAlph,parAddNS} under assumption that the alphabet $\A$ consists of consecutive integers containing~0.
It was shown that there exists an integer alphabet allowing parallel addition if and only if the base is an algebraic number with no conjugates of modulus 1.
Lower bounds on the size of the alphabet were given.

The main result of this paper is generalization of these results to non-integer alphabets, namely $\A\subset\Zbeta$. 
Such alphabets might have elements smaller in modulus comparing to integer ones.
This is useful for instance in online multiplication and division~\cite{Brzicova2016}.
Parallel addition algorithms that use non-integer alphabets are discussed in~\cite{constParAdd}.
The paper \cite{Baker2017} discusses consequences of parallel addition for eventually periodic representations in $\QQ(\beta)$.

This paper is organized as follows: in Section~\ref{sec:preliminaries}, we recall the necessary definitions and show that for parallel addition we can consider only bases being algebraic numbers.
In Section~\ref{sec:necessaryAlphabet}, we prove that if $(\beta, \A)$ allows parallel addition and $\beta'$ is a conjugate of $\beta$,
then there is an alphabet $\A'$ such that $(\beta', \A')$ allows parallel addition. 
If $\A[\beta]=\Zbeta$, we show that $\A$ must contain all representatives modulo $\beta$ and $\beta-1$.
If $\beta$ is an algebraic integer, a consequence is the same lower bound on the size of $\A\subset\Zbeta$ as for integer alphabets.
The assumption $\A[\beta]=\Zbeta$ or existence of parallel addition without anticipation implies that $\beta$ is expanding, i.e., all its conjugates are greater than one in modulus.

The key result from \cite{kBlock} is generalized to $\A\subset\Zbeta$ in Section~\ref{sec:condBase}. 
Namely, there is an alphabet in $\Zbeta$ allowing so-called \emph{$k$-block} parallel addition if and only if $\beta$ is an algebraic number with no conjugates of modulus one.
\section{Preliminaries}
\label{sec:preliminaries}
The concept of positional numeration systems with integer bases and digits is very old and can be easily generalized: 
\begin{definition}
	If $\beta \in \CC$ is such that $|\beta|>1$ and $\A \subset \CC$ is a finite set containing 0,
	then the pair $(\beta, \A)$ is called a \emph{numeration system} with a \emph{base} $\beta$ and \emph{digit set} $\A$,
	usually called an \emph{alphabet}.
\end{definition}

Numbers in a numeration system $(\beta, \A)$ are represented in the following way: let $x$ be a complex number and $x_n,x_{n-1}, x_{n-2},\ldots \in\A, n\geq 0$. We say that $^\omega0 x_n x_{n-1}\cdots x_1 x_0 \bullet x_{-1} x_{-2} \cdots $ is a \emph{$(\beta, \A)$-representation} of $x$ if~$x=\sum_{j=-\infty}^n x_j \beta^j$, where $^\omega0$  denotes the left-infinite sequence of zeros.

The set of all numbers which have a  $(\beta,\A)$-representation with only finitely many non-zero digits is denoted by
$$
\fin{\A}:=\left\{\sum_{j=-m}^n x_j \beta^j\colon n, m \in \NN, x_j \in \A \right\}\,.
$$
The set of all numbers with a finite $(\beta,\A)$-representation with only non-negative powers of $\beta$ is denoted by
$$
\A[\beta]:=\left\{\sum_{j=0}^n x_j \beta^j\colon n\in \NN, x_j \in \A \right\}\,.
$$

We remark that the definition of $\A[\beta]$ is analogous to the one of  $\ZZ[\beta]$, i.e. the smallest ring containing $\ZZ$ and $\beta$, which is equivalent to the set of all sums of powers of $\beta$ with integer coefficients.

Now we show that whenever we require the alphabet to be finite and the  sum of two numbers with finite $(\beta, \A)$-representations to have again a finite $(\beta, \A)$-representation (which is the case of parallel addition), then we can consider only bases which are algebraic numbers.
\begin{lemma}
\label{lem:betaMustBeAlgebraic}
	Let $\beta$ be a complex number such that $|\beta|>1$ and $\A\subset \ZZ[\beta]$ be a finite alphabet 
	with $0\in\A$ and  $1\in\fin{\A}$. 
	If $\NN\subset \fin{\A}$, then $\beta$ is an algebraic number.
\end{lemma}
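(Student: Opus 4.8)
The plan is to argue by contradiction: I assume that $\beta$ is transcendental and use the finiteness of $\A$ to bound a single coefficient, which the inclusion $\NN\subset\fin{\A}$ will force to be arbitrarily large. First I record the structural consequence of the hypotheses on the alphabet. Since $\A\subset\Zbeta$ is finite, every digit $a\in\A$ can be written as an integer polynomial in $\beta$ with only non-negative powers; choosing such an expression for each of the finitely many digits, I obtain a common degree bound $D$ and a common coefficient bound $C$, so that each $a\in\A$ equals $\sum_{i=0}^{D} c_i^{(a)}\beta^i$ with $c_i^{(a)}\in\ZZ$ and $|c_i^{(a)}|\le C$.

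The key observation is that transcendence of $\beta$ makes the powers $\beta^k$, $k\in\ZZ$, linearly independent over $\QQ$: if $\sum_k d_k\beta^k=0$ with integer $d_k$ and only finitely many nonzero, then multiplying by a suitable power of $\beta$ turns this into a genuine polynomial relation over $\QQ$, which is impossible unless all $d_k$ vanish. Hence every element of $\ZZ[\beta,\beta^{-1}]$ has uniquely determined integer coefficients. Now take an arbitrary $N\in\NN$. By $\NN\subset\fin{\A}$ there is a representation $N=\sum_{j=-m}^{n}x_j\beta^j$ with $x_j\in\A$. Substituting the polynomial expression of each digit and collecting terms, $N$ becomes a Laurent polynomial $\sum_k d_k\beta^k$ whose constant coefficient $d_0=\sum_{i=0}^{D}c_i^{(x_{-i})}$ (with absent positions read as $0$) is a sum of at most $D+1$ integers bounded by $C$, so $|d_0|\le (D+1)C$, a bound independent of $N$.

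Comparing this Laurent expansion with the trivial one $N=N\cdot\beta^0$ and invoking uniqueness of the coefficients, I get $d_0=N$, whence $N\le (D+1)C$ for every natural number $N$ — a contradiction as soon as $N>(D+1)C$. Therefore $\beta$ cannot be transcendental and must be algebraic. The conceptual crux, and the step I expect to require the most care, is the uniform bound on the constant coefficient: one must ensure that, although both the number of digits and the spread of powers in the representation of $N$ grow with $N$, only boundedly many of the expanded monomials can land on $\beta^0$, so that $d_0$ stays bounded while uniqueness pins it to the unbounded value $N$. I note in passing that the hypothesis $1\in\fin{\A}$ appears not to be needed for this particular argument, being subsumed by $\NN\subset\fin{\A}$.
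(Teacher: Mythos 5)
Your proof is correct and follows essentially the same route as the paper: express each digit as an integer polynomial in $\beta$ with common degree bound and coefficient bound, expand the representation of $N$ into a Laurent polynomial in $\beta$, and compare the constant coefficient, which is bounded by $(D+1)C$ yet forced to equal $N$ by uniqueness of coefficients for transcendental $\beta$. Your explicit justification of that uniqueness (clearing the negative powers to reduce to a polynomial relation) is a small but welcome addition of rigor over the paper's version, which asserts the coefficient comparison without comment.
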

\begin{proof}
Since $\A\subset \Zbeta$, all digits can be expressed as finite integer combinations of powers of $\beta$. 
Let $d$ be the maximal exponent of $\beta$ occurring in these expressions and 
$C$ be the maximal absolute value of the integer coefficients of all digits in $\A$.

Hence, for every $N\in\NN$, there exist $m, n\in \NN$ and $a_{-m}, \dots, a_{n}\in\A$, 
where $a_i=\sum_{j=0}^d \alpha_{ij}\beta^j$ with $\alpha_{ij}\in\ZZ$ and $|\alpha_{ij}| \leq C$, such that
\begin{align*}
N=\sum_{i=-m}^{n} a_i \beta^i\ = \sum_{i=-m}^{n} \sum_{j=0}^d \alpha_{ij}\beta^{i+j}\,.
\end{align*}
Suppose for contradiction that $\beta$ is transcendental. 
Therefore, the corresponding integer coefficients of powers of $\beta$ 
on the left hand side and on the right hand side must be equal, particularly
$$
\sum_{\substack{i+j=0 \\0\leq j\leq d \,, -m\leq i \leq n}} \alpha_{ij} =N
$$
This is a contradiction, since the left hand side is bounded by $(d+1) \cdot C$, whereas $N$ can be arbitrarily large.
\end{proof}

\begin{corollary}\label{cor:betaMustBeAlgebraic}
	Let $\beta$ be a complex number such that $|\beta|>1$ and let $\A\subset \ZZ[\beta]$ be a finite alphabet 
	with $0\in\A$ and  $1\in\fin{\A}$, resp.\ $1\in\A[\beta]$. 
	If the set $\fin{\A}$, resp.\ $\A[\beta]$, is closed under addition, then $\beta$ is an algebraic number.
\end{corollary}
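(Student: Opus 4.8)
The plan is to reduce the corollary directly to Lemma~\ref{lem:betaMustBeAlgebraic}, whose only additional hypothesis beyond the present ones is the inclusion $\NN\subset\fin{\A}$. The key observation is that closure under addition manufactures all of $\NN$ out of the single element $1$, so the extra hypothesis comes for free.

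First I would treat the case where $\fin{\A}$ is closed under addition and $1\in\fin{\A}$. Since $0\in\A\subset\fin{\A}$ and $1\in\fin{\A}$, I would argue by induction on $n$: assuming $n\in\fin{\A}$, closure under addition applied to $n$ and $1$ gives $n+1\in\fin{\A}$. Hence every natural number lies in $\fin{\A}$, that is $\NN\subset\fin{\A}$, so all hypotheses of Lemma~\ref{lem:betaMustBeAlgebraic} are satisfied and $\beta$ is algebraic.

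For the case where $\A[\beta]$ is closed under addition and $1\in\A[\beta]$, the very same induction, now carried out inside $\A[\beta]$, yields $\NN\subset\A[\beta]$. I would then invoke the inclusion $\A[\beta]\subseteq\fin{\A}$, which holds because a sum of digits against only non-negative powers of $\beta$ is in particular a finite $(\beta,\A)$-representation. This simultaneously gives $\NN\subset\fin{\A}$ and $1\in\fin{\A}$, so Lemma~\ref{lem:betaMustBeAlgebraic} again applies and $\beta$ is algebraic.

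There is essentially no genuine obstacle here: all the mathematical weight sits in Lemma~\ref{lem:betaMustBeAlgebraic}, and the corollary is a routine bootstrap by induction. The only point I would state with some care is the inclusion $\A[\beta]\subseteq\fin{\A}$, which is what allows the $\A[\beta]$ case to be fed into a lemma phrased purely in terms of $\fin{\A}$.
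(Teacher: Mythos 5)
Your proof is correct and follows exactly the paper's own argument: bootstrap $\NN\subset\fin{\A}$ (resp.\ $\NN\subset\A[\beta]$) from $1$ by closure under addition, use the inclusion $\A[\beta]\subset\fin{\A}$ in the second case, and invoke Lemma~\ref{lem:betaMustBeAlgebraic}. The paper merely leaves the induction implicit, which you spell out.
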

\begin{proof}
	The closedness of $\fin{\A}$ under addition and  $1\in\fin{\A}$ implies $\NN\subset\fin{\A}$.
	If $\A[\beta]$ is closed under addition and $1\in\A[\beta]\subset\fin{\A}$, then $\NN\subset\A[\beta]\subset\fin{\A}$.
	In both cases, Lemma~\ref{lem:betaMustBeAlgebraic} applies.
\end{proof}

The concept of parallelism for operations on representations is formalized by the following definition.
\begin{definition}
Let $\A$ and $\B$ be alphabets. A function $\varphi:\B^\ZZ \rightarrow \A^\ZZ$ is said to be \emph{$p$-local} if there exist $r,t\in\NN$ satisfying $p=r+t+1$ and a function $\phi: \B^p \rightarrow \A$ such that, for any $w=(w_j)_{j\in\ZZ}\in\B^\ZZ$ and its image $z=\varphi(w)=(z_j)_{j\in\ZZ}\in\A^\ZZ$, we have $z_j=\phi(w_{j+t},\cdots,w_{j-r})$ for every $j\in\ZZ$. The parameter $t$, resp. $r$, is called \emph{anticipation}, resp. \emph{memory}.
\end{definition}

In other words, every digit can by determined from only limited number of neighboring input digits.
Since a $(\beta,\A+\A)$-representation of sum of two numbers can be easily obtained by digit-wise addition,
the crucial part of parallel addition is conversion from the alphabet $\A+\A$ to $\A$.

\begin{definition}
\label{def:digitSetConversion}
Let $\beta$ be a base and let $\A$ and $\B$ be alphabets containing 0. A function $\varphi:\B^\ZZ\rightarrow \A^\ZZ$ such that
  \begin{enumerate}
      \item for any $w=(w_j)_{j\in\ZZ}\in\B^\ZZ$ with finitely many non-zero digits, $z=\varphi(w)=(z_j)_{j\in\ZZ}\in\A^\ZZ$ has only finite number of non-zero digits, and
      \item $\sum_{j\in\ZZ} w_j \beta^j= \sum_{j\in\ZZ} z_j \beta^j$,
  \end{enumerate}
  is called a \emph{digit set conversion} in the base $\beta$ from $\B$ to $\A$. Such a conversion $\varphi$ is said to be \emph{computable in parallel} if $\varphi$ is a $p$-local function for some $p\in\NN$. \emph{Parallel addition} in a numeration system $(\beta,\A)$ is a digit set conversion in the base $\beta$ from $\A+\A$ to $\A$, which is computable in parallel.
\end{definition}

\section{\texorpdfstring{Necessary conditions on alphabets allowing parallel addition in $\A\subset\Zbeta$}{Necessary conditions on alphabets allowing parallel addition in A subset Z[beta]}}
\label{sec:necessaryAlphabet}
Through this section, we assume that the base $\beta$ is an algebraic number and the alphabet~$\A$ is a finite subset of $\Zbeta$ such that $\{0\}\subsetneq \A$.
The finiteness of the alphabet is a natural assumption for a practical numeration system, whereas the requirement that $\beta$ is an algebraic number is justified by  Corollary~\ref{cor:betaMustBeAlgebraic}.

We recall that for an algebraic number $\beta$, if $\alpha, \gamma,\delta$ are elements of $\ZZ[\beta]$, then \emph{$\gamma$ is congruent to $\delta$ modulo $\alpha$ in $\ZZ[\beta]$}, denoted by $\gamma\equiv_\alpha\delta$, if there exists $\varepsilon\in\Zbeta$ such that $\gamma-\delta=\alpha\varepsilon$.

In this section, we recall the known results on necessary properties of integer alphabets allowing parallel addition, and we extend them to non-integer alphabets. In~\cite{minAlph}, the following statement is proven:

\begin{theorem} \label{thm:reprBetaMinusOne}
	Let $(\beta,\A)$ be a numeration system such that $\A\subset\Zbeta$.
	If there exists a $p$-local parallel addition in $(\beta,\A)$ defined by a function $\phi: (\A+\A)^p \rightarrow \A$,
	then $\phi(b,\dots,b)\equiv_{\beta-1} b$ for any $b \in \A+\A$.
\end{theorem}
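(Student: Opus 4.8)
The plan is to exploit a constant input sequence and the $p$-locality of $\phi$. Consider the bi-infinite input word $w \in (\A+\A)^\ZZ$ that is constantly equal to $b$ in every position, i.e. $w_j = b$ for all $j \in \ZZ$. Applying the parallel addition $\varphi$ to $w$ yields an output $z = \varphi(w)$ whose every digit is $z_j = \phi(w_{j+t},\dots,w_{j-r}) = \phi(b,\dots,b)$, since all $p$ consecutive input digits equal $b$. Writing $c := \phi(b,\dots,b) \in \A$, the output word is therefore the constant word equal to $c$.

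\medskip

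\emph{First} I would make sense of the value-preservation condition (Definition~\ref{def:digitSetConversion}, part~ii) for this constant word, even though it has infinitely many non-zero digits and the formal sums $\sum_j w_j\beta^j$ and $\sum_j z_j\beta^j$ need not converge. The key trick is to pass to a finite setting by subtracting two shifted copies of the constant word, which leaves only finitely many non-zero positions. Concretely, let $u = w - S w$ where $S$ denotes the shift; then $u$ has $u_0 = b$, and all other digits are $0$ (a single non-zero digit). By the $p$-locality applied positionwise and the value-preserving property applied to the finitely-supported words $w$ truncated appropriately, the corresponding output difference $z - Sz$ must represent the same value $b$ as $u$ does. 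Since $z$ is the constant word $c$, the difference $z - Sz$ likewise has a single non-zero digit, namely value $c$ at the analogous position. Matching values gives that $c$ and $b$ represent the same number up to the shift structure.

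\medskip

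\emph{The cleaner way} to extract the congruence is to work directly modulo $\beta - 1$. Evaluating "at $\beta = 1$" is exactly reduction modulo $\beta - 1$ in $\Zbeta$: for any finite word the map $\sum_j x_j \beta^j \mapsto \sum_j x_j \pmod{\beta-1}$ is well defined because $\beta^j \equiv_{\beta-1} 1$ for every $j$. I would therefore take a long finite block of the input, say $w_j = b$ for $-m \le j \le n$ and $0$ elsewhere, so that $\varphi(w)$ is finitely supported and equals $c$ on the bulk of the central positions with only boundary corrections of bounded total length (controlled by $r$ and $t$). Summing the value-preservation identity $\sum_j w_j \beta^j = \sum_j z_j \beta^j$ and reducing modulo $\beta - 1$, the left side becomes $(n+m+1)\,b$ and the right side becomes $(n+m+1)\,c$ plus a bounded boundary contribution independent of the block length. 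Letting the block length $n + m + 1$ grow and comparing, the bounded boundary term forces $b \equiv_{\beta-1} c$, i.e. $\phi(b,\dots,b) \equiv_{\beta-1} b$.

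\medskip

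\emph{The main obstacle} I expect is the boundary bookkeeping: the output $z$ agrees with the constant $c$ only away from the two ends of the block, and near the ends the digits $z_j$ can differ from $c$. I must argue that the total discrepancy coming from these boundary digits is bounded independently of the block length $m + n$, so that after dividing by the growing factor $n+m+1$ it cannot affect the congruence. This boundedness follows because $\phi$ depends on only $p = r + t + 1$ neighboring inputs, so at most $O(p)$ output positions near each end can deviate from $c$, and each output digit lies in the finite alphabet $\A$; hence the boundary sum is bounded and, crucially, its reduction modulo $\beta - 1$ contributes a fixed element independent of the block size. Carefully isolating this fixed finite contribution and cancelling it against the growing multiples of $b$ and $c$ is the delicate step, but once it is handled the congruence $\phi(b,\dots,b)\equiv_{\beta-1} b$ drops out immediately.
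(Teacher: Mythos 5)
Your ``cleaner way'' is, in outline, the standard proof of this statement. Note that the paper itself does not prove Theorem~\ref{thm:reprBetaMinusOne} at all (it is quoted from \cite{minAlph}), but your technique --- feed in a long finite block of the digit $b$, use $p$-locality to see that the output equals $c:=\phi(b,\dots,b)$ on all bulk positions while only the at most $r+t$ positions at each end can deviate, then compare values modulo $\beta-1$ --- is the same one used in \cite{minAlph} and in the paper's own proof of Lemma~\ref{lem:alphabetRestrictions}. Your first sketch (the bi-infinite constant word and the differencing $w-Sw$) does not work as written, since conditions (i)--(ii) of Definition~\ref{def:digitSetConversion} apply only to finitely supported words and $\varphi$ is not additive, so $\varphi(w)-\varphi(Sw)$ has nothing to do with a conversion of $w-Sw$; but you abandon it yourself, so it does no harm.

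Two steps of the main argument need repair. First, your claim that $\beta^j\equiv_{\beta-1}1$ ``for every $j$'' is false for $j<0$: negative powers of $\beta$ need not lie in $\Zbeta$, and the paper's congruence is defined inside $\Zbeta$. Since the output word is supported on $[-m-t,\,n+r]$, you should first multiply the identity $\sum_j w_j\beta^j=\sum_j z_j\beta^j$ by $\beta^{m+t}$; then all exponents are non-negative, both sides lie in $\Zbeta$, and reduction modulo $\beta-1$ legitimately gives $(n+m+1)\,b\equiv_{\beta-1}(n+m+1-r-t)\,c+E$, where $E$ is the sum of the boundary output digits. Second, the finishing move cannot be ``dividing by the growing factor'': in the quotient $\Zbeta/(\beta-1)\Zbeta$ one can neither divide by $n+m+1$ nor take limits, and mere \emph{boundedness} of the boundary term would not suffice either --- pigeonhole would only yield $k(b-c)\equiv_{\beta-1}0$ for some $k\geq 1$, which does not imply $b\equiv_{\beta-1}c$ in a ring with torsion (for $\beta=3$ one has $2x\equiv_{2}0$ for every $x$). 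What saves the argument is exactly the stronger property you assert: by locality, the windows at the two ends of the block are literally identical for every block length at least $p$, hence the boundary digits, and therefore $E$, are the \emph{same element} for all such lengths. So write the displayed congruence for two consecutive block lengths $N$ and $N+1$ and subtract; the fixed terms cancel and $b\equiv_{\beta-1}c$ follows. With these two repairs your proof is complete.
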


The same paper explain that, when considering only integer alphabets $\A \subset \ZZ$ from the perspective of parallel addition algorithms, all the numbers $\beta$, $1/\beta$, and their algebraic conjugates behave analogously: parallel addition algorithms exist either for all, or for none of them. This statement can be extended to non-integer alphabets as well -- the following lemma summarizes that if we have a parallel addition algorithm for a base $\beta$, then we easily obtain such an algorithm also for conjugates of $\beta$ by field isomorphism. Regarding the base $1/\beta$, we can use the equality $\fin{\A} = \fin[1/\beta]{\A}$ to transfer the parallel addition algorithm, and thus in fact drop the requirement on the base to be greater than~$1$ in modulus.

\begin{lemma}
\label{lem:parAddAlgForConjugate}
	Let $(\beta,\A)$ be a numeration system such that $\A\subset\Zbeta$ and $\beta$ is an algebraic number.
	Let $\beta'$ be a conjugate of $\beta$ such that $|\beta'|\neq 1$ and $\sigma:\QQ(\beta)\mapsto \QQ(\beta')$ be the corresponding field isomorphism.
	If there is a $p$-local parallel addition function $\varphi$ in $(\beta,\A)$, 
	then there exists a $p$-local parallel addition function~$\varphi'$ in~$(\beta',\A')$, where $\A'=\{\sigma(a) \colon a\in\A\}$.
\end{lemma}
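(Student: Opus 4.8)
The plan is to transport the window rule of $\varphi$ through the isomorphism $\sigma$. First I would record that $\sigma\colon\QQ(\beta)\to\QQ(\beta')$ fixes $\QQ$ pointwise, sends $\beta$ to $\beta'$, and is a ring isomorphism; in particular $\sigma(0)=0$ and $\sigma(\beta^j)=(\beta')^j$ for every $j\in\ZZ$. Additivity gives $\sigma(\A)=\A'$ and $\sigma(\A+\A)=\A'+\A'$, both bijectively, so that $\sigma^{-1}(\A'+\A')=\A+\A$. Writing $\varphi$ via its window function $\phi\colon(\A+\A)^p\to\A$ with anticipation $t$ and memory $r$, I would define $\phi'\colon(\A'+\A')^p\to\A'$ by conjugation,
\[
\phi'(b'_1,\dots,b'_p):=\sigma\bigl(\phi(\sigma^{-1}(b'_1),\dots,\sigma^{-1}(b'_p))\bigr),
\]
which is well defined exactly because $\sigma^{-1}$ maps $\A'+\A'$ onto $\A+\A$. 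Letting $\varphi'$ be the $p$-local map induced by $\phi'$ with the same $r$ and $t$ then supplies the required locality automatically.

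Next I would verify the two conditions of a digit set conversion for $\varphi'$ in base $\beta'$. Condition~(i) follows from $\phi(0,\dots,0)=0$, which must hold for $\varphi$ since otherwise the finite all-zero input would be mapped to a sequence with infinitely many non-zero digits; applying $\sigma$ yields $\phi'(0,\dots,0)=\sigma(0)=0$, and the same windowing argument shows that a finite input for $\varphi'$ has a finite output. Condition~(ii) is the heart of the matter: given a finite $w'=(w'_j)\in(\A'+\A')^\ZZ$, put $w_j:=\sigma^{-1}(w'_j)$, a finite input for $\varphi$, and let $z:=\varphi(w)$. By the definition of $\phi'$ the output $z':=\varphi'(w')$ satisfies $z'_j=\sigma(z_j)$ for all $j$. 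Applying $\sigma$ to the value-preservation identity $\sum_j w_j\beta^j=\sum_j z_j\beta^j$ and using $\sigma(\beta^j)=(\beta')^j$ produces $\sum_j w'_j(\beta')^j=\sum_j z'_j(\beta')^j$, which is precisely condition~(ii) in base $\beta'$.

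I expect the only delicate point to be the interpretation of $(\beta',\A')$ as a numeration system when $|\beta'|<1$, since the base is then not larger than one in modulus. Here I would observe that condition~(ii) above is an identity between \emph{finite} sums in $\QQ(\beta')$, hence preserved by the ring homomorphism $\sigma$ with no convergence concerns, even though $\sigma$ need not respect the modulus; thus $\varphi'$ is a legitimate $p$-local parallel addition for the base $\beta'$ regardless of $|\beta'|$. To place it in the numeration-system framework of the definitions I would invoke the equality $\fin[\beta']{\A'}=\fin[1/\beta']{\A'}$, obtained by reversing the digit order of a finite representation, which transfers $\varphi'$ to the base $1/\beta'$ (now of modulus greater than one) and merely swaps memory and anticipation. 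This is the sole use of the hypothesis $|\beta'|\neq1$; everything else is routine bookkeeping of the window indices.
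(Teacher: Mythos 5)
Your proof is correct and follows essentially the same route as the paper's: conjugating the window function $\phi$ through the field isomorphism $\sigma$, checking that $\phi'(0,\dots,0)=0$ gives finiteness of outputs, and applying $\sigma$ to the finite value-preservation identity to get condition~(ii) in base $\beta'$. Your two additional touches --- explicitly justifying $\phi(0,\dots,0)=0$ via condition~(i), and invoking $\fin[\beta']{\A'}=\fin[1/\beta']{\A'}$ when $|\beta'|<1$ --- are sound and correspond exactly to what the paper handles in the discussion preceding the lemma.
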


\begin{proof}
Let $\phi:\A^p\rightarrow\A$ be a mapping which defines $\varphi$ with $p=r+t+1$. We define a mapping $\phi':(\A')^p\rightarrow \A'$ by
$$
\phi'(w'_{j+t}, \dots, w'_{j-r})=\sigma\left(\phi\left(\sigma^{-1}(w'_{j+t}), \dots, \sigma^{-1}(w'_{j-r})\right)\right)\,.
$$
Next, we define a digit set conversion  $\varphi':(\A'+\A')\rightarrow\A'$ by $\varphi'(w')=(z'_j)_{j\in\ZZ}$, where $w'=(w'_j)_{j\in\ZZ}$ and $z'_j=\phi'(w'_{j+t}, \dots, w'_{j-r})$. Obviously, if $w'$ has only finitely many non-zero entries, then there is only finitely many non-zeros in $(z'_j)_{j\in\ZZ}$, since
\begin{align*}
	\phi'(0, \dots, 0)&=\sigma\left(\phi\left(\sigma^{-1}(0), \dots, \sigma^{-1}(0)\right)\right) \\
	&=\sigma\left(\phi\left(0, \dots, 0\right)\right)=\sigma\left(0\right)=0\,.
\end{align*}
The value of the number represented by $w'$ is also preserved:
\begin{align*}
\sum_{j\in\ZZ}w'_j {\beta'}^j&=\sum_{j\in\ZZ}\sigma(w_j) \sigma(\beta)^j =\sigma\left(\sum_{j\in\ZZ}w_j\beta^j\right) \\
	&=\sigma\left(\sum_{j\in\ZZ}z_j\beta^j \right) \\
 & =\sigma\left(\sum_{j\in\ZZ}\phi\left(w_{j+t}, \dots,w_{j-r}\right)\beta^j\right) \\
    &=\sum_{j\in\ZZ}\sigma(\phi\left(w_{j+t}, \dots,w_{j-r}\right)){\beta'}^j=\sum_{j\in\ZZ}z'_j {\beta'}^j \,,
\end{align*}
where $w_j=\sigma^{-1}(w'_j)$ for $j\in\ZZ$ and $\varphi((w_j)_{j\in\ZZ})=(z_j)_{j\in\ZZ}$.
\end{proof}

Next, it is shown again in \cite{minAlph} that if a base $\beta$ has a real conjugate greater than one, then there are some extra requirements on the alphabet $\A\subset\Zbeta$.
The following lemma strengthens the results a bit.

\begin{lemma}
\label{lem:alphabetRestrictions}
Let $(\beta,\A)$ be a numeration system such that $\A\subset\Zbeta$ and $1<\beta\in\RR$. Let $\lambda=\min \A$ and $\Lambda=\max \A$. If there exists a $p$-local parallel addition in $(\beta, \A)$, with $p=r+t+1$, defined by a mapping $\phi\colon (\A+\A)^p\rightarrow \A$, then:
\begin{enumerate}
	\item $\phi(b,\dots,b)\neq \lambda$ for all $b\in\A+\A$ such that ${b>\lambda{} \land (b\geq 0 \lor t=0)}$, 
	\item $\phi(b,\dots,b)\neq \Lambda$ for all $b\in\A+\A$ such that ${b<\Lambda{} \land (b\leq 0 \lor t=0)}$, 
	\item If $\Lambda\neq 0$, then $\phi(\Lambda,\dots,\Lambda)\neq \Lambda$,
	\item If $\lambda\neq 0$, then $\phi(\lambda,\dots,\lambda)\neq \lambda$.
\end{enumerate}
\end{lemma}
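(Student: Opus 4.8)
The plan is to probe the conversion $\varphi$ with long constant inputs and read off the forced behaviour at the two ends of the block. Fix $b \in \A+\A$ and, for a large $L\in\NN$, let $w$ be the word equal to $b$ on the positions $0,1,\dots,L$ and $0$ elsewhere. Since $\phi(0,\dots,0)=0$, the image $z=\varphi(w)$ is finitely supported, with its nonzero part contained in $[-t,L+r]$. This output splits into an \emph{interior}, where the whole window read by $\phi$ consists of $b$'s and therefore $z_j=\phi(b,\dots,b)=:c$, and two \emph{boundary bands} of width at most $r+t$ around the positions $0$ and $L$. A boundary band sees a pattern of $b$'s and $0$'s that depends only on the distance to the endpoint, so its digits stabilize to values independent of $L$ once $L$ is large. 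The whole argument rests on the value-conservation identity $\sum_j w_j\beta^j=\sum_j z_j\beta^j$ applied to this $w$.

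The key step is to isolate the coefficient of the dominant power $\beta^{L}$. Replacing the interior digits by $c$, summing the two geometric series, dividing by $\beta^{L}$ and letting $L\to\infty$ should yield an exact identity of the shape
\[
\frac{b\beta}{\beta-1}=\frac{c\,\beta^{1-t}}{\beta-1}+\sum_{s} f_s\,\beta^{s},
\]
where the $f_s$ are the stabilized digits of the upper band and hence lie in $\A\subseteq[\lambda,\Lambda]$. Because $1<\beta\in\RR$, every power $\beta^{s}$ is positive, so replacing the $f_s$ by $\Lambda$ (or by $\lambda$) turns this identity into one-sided inequalities linking $b$, $c$, $\lambda$, $\Lambda$. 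I expect the statements (iii) and (iv) to come out most directly: there $c$ equals the very bound ($\Lambda$, resp.\ $\lambda$) appearing on the right, so positivity of the $\beta^{s}$ forces equality throughout, pinning all boundary digits to that extreme --- in particular $\phi(\Lambda,0,\dots,0)=\Lambda$ in case (iii). Feeding these forced windows back as a fresh input and invoking $\Lambda\neq 0$ (resp.\ $\lambda\neq0$) then contradicts value conservation. It is enough to treat (i) and (iii), since the map $a\mapsto -a$ carries a parallel addition for $(\beta,\A)$ to one for $(\beta,-\A)$ and interchanges $\lambda\leftrightarrow-\Lambda$, hence (ii) with (i) and (iv) with (iii).

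For (i) and (ii) the mechanism is the opposite extreme: the discrepancy between $b$ and $c=\lambda$ is as large as possible, and the accumulated deficit must be carried off through a boundary band of bounded capacity. Here the two hypotheses do genuine work. The sign condition $b\ge 0$ fixes the direction of this deficit, while the alternative $t=0$ removes the anticipation and thereby determines which band (lower or upper) has to absorb it; in the causal case one tracks the running value through the block from the least significant end. I would also feed in Theorem~\ref{thm:reprBetaMinusOne}, which gives $c\equiv_{\beta-1}b$ and so constrains the admissible $c$ modulo $\beta-1$, sharpening the inequalities above.

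The main obstacle is exactly the passage from the value identity to a contradiction: controlling the stabilized boundary bands precisely enough, and ruling out the ``self-consistent'' solutions in which the output merely copies the input (these satisfy value conservation on the single block and must be excluded by the secondary, tailored input). The anticipation parameter $t$ is the subtle point --- it shifts the dominant discrepancy between the two ends, which is why the clean statements require either $b$ of a definite sign or $t=0$, and why the bookkeeping of the bands, rather than the initial block computation, is where the real care is needed.
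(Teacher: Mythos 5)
Your setup is the same as the paper's: convert a long constant block of $b$'s, use that the two boundary bands stabilize once the block is long, and exploit positivity of powers of the real base $\beta>1$ in the value\-/conservation identity. Moreover, your dominant\-/coefficient identity is exactly the ingredient the paper imports as Corollary 3.6 of \cite{minAlph}: writing $W=\beta^{t-1}\sum_{s}f_s\beta^{s}$ for the upper band, your identity is $W=\frac{b\beta^{t}-c}{\beta-1}$. The genuine gap is what you do next. By dividing by $\beta^{L}$ and letting $L\to\infty$ you have discarded the lower band, and the upper band alone cannot prove (i): with $c=\lambda$ and $f_s\in[\lambda,\Lambda]$, the two one-sided estimates your identity yields are $b\geq\lambda\beta^{r}$ (from $f_s\geq\lambda$) and $b\beta+(\Lambda-\lambda)\beta^{1-t}\leq\Lambda\beta^{r+1}$ (from $f_s\leq\Lambda$); neither contradicts $b>\lambda$, and neither ever uses the hypotheses $b\geq0$ or $t=0$. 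This is not repairable within the upper band: the surplus $(b-\lambda)$ per interior position is absorbed at the most significant end, and the exact value $W=\frac{b\beta^{t}-\lambda}{\beta-1}$ is perfectly consistent there; the obstruction sits at the least significant end. Accordingly, the paper substitutes the known value of $W$ back into the \emph{full finite-length} identity, which leaves
\[
b\left(\sum_{j=1}^{r}\frac{1}{\beta^{j}}-\frac{1}{\beta-1}\right)=-\frac{\lambda}{\beta-1}+\sum_{j=1}^{r+t}\widetilde{w_j}\,\frac{1}{\beta^{j}}\,,
\]
and only then bounds the \emph{lower}-band digits $\widetilde{w_j}\geq\lambda$; using $\frac{1}{\beta-1}=\sum_{j\geq1}\beta^{-j}$ this collapses to $\lambda\geq b\beta^{t}$, which is precisely where $t=0$ (giving $\lambda\geq b$) or $b\geq0$ (giving $\lambda\geq b\beta^{t}\geq b$) produce the contradiction. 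Your prose about a ``deficit carried off through a boundary band of bounded capacity'' points in the right direction, but the band that matters is the one your formalized identity eliminated, and the key inequality $\lambda\geq b\beta^{t}$ is never derived.

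The treatment of (iii)/(iv) has an independent flaw. With $b=c=\Lambda$ your identity gives $\sum_{s=1-t}^{r}f_s\beta^{s}=\Lambda\sum_{s=1-t}^{0}\beta^{s}$, which does \emph{not} force the boundary digits to equal $\Lambda$: the ``copy'' solution ($f_s=\Lambda$ for $s\leq0$, $f_s=0$ for $s\geq1$) satisfies it, as do others, so nothing is pinned and $\phi(\Lambda,0,\dots,0)=\Lambda$ does not follow; your own caveat about self\-/consistent copy solutions is exactly this unfixed hole. (For what it is worth, the paper does not reprove (3)--(4) either; it cites \cite{minAlph}. Your reduction of (ii) to (i) and of (iv) to (iii) via $a\mapsto-a$ is correct and matches the paper's ``the proof of (2.) is similar''.)
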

\begin{proof}
(1.): Let $b\in\A+\A$ be such that $b>\lambda$. Assume, for contradiction, that $\phi(b,\dots,b)= \lambda$. 
We follow the proof of Claim 3.5. in \cite{minAlph}.
For any $n\in\NN, n\geq 1$, we consider the number represented by
\begin{equation*}
	^{\omega}\!0 \underbrace{b\dots b}_{t} \underbrace{b\dots b}_{n}\bullet \underbrace{b\dots b}_{r}0^\omega\,.
\end{equation*}
Its representation after the digit set conversion has the form
\begin{equation*}
	^{\omega}\!0 \underbrace{w_{r+t}\dots w_{1}}_{\beta^n W}\underbrace{\lambda\dots \lambda}_{n}\bullet \widetilde{w_1}\dots \widetilde{w_{r+t}}0^\omega\,,
\end{equation*}
where $W=\sum_{j=1}^{r+t} w_j \beta^{j-1}$ and $w_j,\tilde{w}_j\in\A$.
Since both representations have the same value, we get:
\begin{equation*}
	b \sum_{j=-r}^{n+t-1} \beta^j =  \beta^n W + \lambda \sum_{j=0}^{n-1} \beta^j + \sum_{j=1}^{r+t}\widetilde{w_j} \beta^{-j} 
\end{equation*}
for all $n\geq 1$. Corollary 3.6. in  \cite{minAlph} gives that $W=\frac{b \beta^t-\lambda}{\beta-1}$. Thus,
\begin{align*}
b \sum_{j=-r}^{-1} \beta^j &+ b \frac{\beta^{n+t}-1}{\beta-1} =\\
&=\beta^n\frac{b \beta^t-\lambda}{\beta-1}+\lambda\frac{\beta^n-1}{\beta-1}+ \sum_{j=1}^{t+r} \widetilde{w_j}\beta^{-j}\,.
\end{align*}
Hence
\begin{align*}
	b&\left(\sum_{j=1}^{r} \frac{1}{\beta^j} +\frac{-1}{\beta-1}\right)=\lambda \frac{-1}{\beta-1}+\sum_{j=1}^{t+r} \widetilde{w_j}\frac{1}{\beta^{j}} \\
	&\geq \lambda \left(\frac{-1}{\beta-1}+ \sum_{j=1}^{t+r}\frac{1}{\beta^{j}}\right)\,.
\end{align*}
Using $\frac{1}{\beta-1}=\sum_{j=1}^{\infty}\frac{1}{\beta^{j}}$, we get 
\begin{align*}
	-b\frac{1}{\beta^{r}}\frac{1}{\beta-1}&=-b\sum_{j=r+1}^{\infty} \frac{1}{\beta^j} \geq -\lambda \sum_{j=r+t+1}^\infty \frac{1}{\beta^{j}} \\
	 &= -\lambda \frac{1}{\beta^{r+t}}\frac{1}{\beta-1} \,.
\end{align*}
Thus, we have $ \lambda \geq b \beta^t$. If $t=0$, then it contradicts the assumption $b>\lambda$.
If $b\geq 0$, then $ \lambda \geq b \beta^t \geq b$ since $\beta>1$, which is also a contradiction.

The proof of (2.) is similar.
For (3.) and (4.), see \cite{minAlph}.
\end{proof}

Now we can conclude with the following statement.

\begin{theorem}
\label{thm:betaPositiveImpliesTwoMoreElements}
Let $(\beta,\A)$ be a numeration system such that $\A\subset\Zbeta$, $\beta$ is an algebraic number with a positive real conjugate and there is parallel addition in $(\beta,\A)$. Let $\lambda=\min \A$ and $\Lambda=\max \A$. If $\lambda\equiv_{\beta-1}\Lambda$, then there exists $c\in\A, \lambda\neq c\neq \Lambda$ such that $\lambda\equiv_{\beta-1} c \equiv_{\beta-1} \Lambda$. If  $\lambda\not\equiv_{\beta-1}\Lambda$, then there exist $a,b\in\A, a\neq \lambda, b \neq \Lambda$ such that $a\equiv_{\beta-1} \lambda$ and $b\equiv_{\beta-1} \Lambda$.
\end{theorem}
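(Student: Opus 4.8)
The plan is to transport the whole question to a real conjugate, where the order‑based restrictions of Lemma~\ref{lem:alphabetRestrictions} become available, to read off the required ``extra'' digits as outputs of the conversion applied to constant words, and then to pull everything back through the field isomorphism. First I would fix the positive real conjugate $\beta'$ of $\beta$ together with the isomorphism $\sigma\colon\QQ(\beta)\to\QQ(\beta')$, assuming $\beta'>1$ (the case $0<\beta'<1$ being reduced to this by passing to $1/\beta'$, as in the remark preceding Lemma~\ref{lem:parAddAlgForConjugate}). By Lemma~\ref{lem:parAddAlgForConjugate} the pair $(\beta',\A')$ with $\A'=\sigma(\A)\subset\RR$ admits a $p$-local parallel addition with the same memory $r$ and anticipation $t$, defined by $\phi'(w'_1,\dots,w'_p)=\sigma(\phi(\sigma^{-1}(w'_1),\dots,\sigma^{-1}(w'_p)))$. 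Since $\beta'\in\RR$ the set $\A'$ is totally ordered; I read the statement as characterising $\lambda,\Lambda\in\A$ by $\sigma(\lambda)=\min\A'=:\lambda'$ and $\sigma(\Lambda)=\max\A'=:\Lambda'$, and I note $\lambda'<\Lambda'$ because $\{0\}\subsetneq\A$. Moreover $\sigma$ restricts to a ring isomorphism $\Zbeta\to\ZZ[\beta']$ sending $\beta-1$ to $\beta'-1$, so $\gamma\equiv_{\beta-1}\delta$ holds iff $\sigma(\gamma)\equiv_{\beta'-1}\sigma(\delta)$; hence it suffices to produce the asserted elements inside $\A'$ with respect to $\equiv_{\beta'-1}$ and map them back by $\sigma^{-1}$.

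The two tools on $(\beta',\A')$ are Theorem~\ref{thm:reprBetaMinusOne}, which gives $\phi'(b',\dots,b')\equiv_{\beta'-1}b'$ for every $b'\in\A'+\A'$, and Lemma~\ref{lem:alphabetRestrictions}. The core move is to evaluate $\phi'$ on the constant words at the endpoints. If $\Lambda'\neq0$, part~(3) gives $d':=\phi'(\Lambda',\dots,\Lambda')\neq\Lambda'$, while Theorem~\ref{thm:reprBetaMinusOne} gives $d'\in\A'$ with $d'\equiv_{\beta'-1}\Lambda'$; symmetrically, if $\lambda'\neq0$, part~(4) yields $e':=\phi'(\lambda',\dots,\lambda')\neq\lambda'$ with $e'\equiv_{\beta'-1}\lambda'$. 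In the case $\lambda\not\equiv_{\beta-1}\Lambda$ this already delivers $b=\sigma^{-1}(d')$ and $a=\sigma^{-1}(e')$ as long as both endpoints are nonzero, and since $\lambda'\not\equiv_{\beta'-1}\Lambda'$ at most one endpoint can vanish. In the case $\lambda\equiv_{\beta-1}\Lambda$, one checks that $d'$ (or $e'$) differs from \emph{both} $\lambda'$ and $\Lambda'$, in which case it is a genuine third representative $c=\sigma^{-1}(d')$.

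The main obstacle is precisely the degenerate configurations the constant‑endpoint argument leaves open: an endpoint equal to $0$, where parts~(3)/(4) are vacuous, and, in the congruent case, the possibility that $\phi'$ merely interchanges $\lambda'$ and $\Lambda'$ without exposing a third value. I would handle these uniformly by feeding $\phi'$ a \emph{non-extremal} constant word: choose $b'\in\A'+\A'$ with $\lambda'<b'<\Lambda'$ and $b'\equiv_{\beta'-1}\Lambda'$ (respectively $\equiv_{\beta'-1}\lambda'$); then parts~(1)/(2) of Lemma~\ref{lem:alphabetRestrictions}, applicable because $b'$ is not extremal and has the appropriate sign (or because $t=0$), force $\phi'(b',\dots,b')$ away from the endpoint value, while Theorem~\ref{thm:reprBetaMinusOne} keeps it in the prescribed residue class, so this output is the sought element. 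Two convenient shortcuts feed into this: $0\in\A'$ already furnishes the required element whenever it lies strictly between the endpoints and in the right class, and when an endpoint equals $0$ one looks for a negative $b'\in\A'+\A'$ with $b'\equiv_{\beta'-1}0$.

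The delicate point, which I expect to be where the real work lies, is to \emph{guarantee the existence} of such an intermediate $b'$ in the prescribed residue class — especially when $0$ is an endpoint — rather than to run the evaluation once $b'$ is in hand. I anticipate resolving this by a short argument using closure of the representable set under the conversion together with a residue‑counting or descent step on $\A'+\A'$: if no such intermediate $b'$ existed, the conversion would be compelled, on some admissible constant input, to output a forbidden extremal digit, contradicting Lemma~\ref{lem:alphabetRestrictions}. Assembling the nonzero‑endpoint cases with this degenerate analysis, and translating the resulting elements of $\A'$ back through $\sigma^{-1}$ using the equivalence of $\equiv_{\beta'-1}$ and $\equiv_{\beta-1}$, then yields both the congruent and the non‑congruent conclusions of the theorem.
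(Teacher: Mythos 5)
Your reduction to a real base greater than one and your treatment of the non-degenerate cases match the paper's proof: evaluate $\phi$ on the constant words at the endpoints and combine Theorem~\ref{thm:reprBetaMinusOne} (which keeps the output in the right residue class) with Lemma~\ref{lem:alphabetRestrictions} (which keeps it away from the endpoints). Note, incidentally, that your worry about $\phi'$ merely ``interchanging'' $\lambda'$ and $\Lambda'$ in the congruent case is unfounded: with the paper's normalization $\Lambda>0$, part~(1) of Lemma~\ref{lem:alphabetRestrictions} applied to $b=\Lambda$ (which satisfies $b>\lambda$ and $b\geq 0$) already gives $\phi(\Lambda,\dots,\Lambda)\neq\lambda$, while part~(3) gives $\phi(\Lambda,\dots,\Lambda)\neq\Lambda$, so this single evaluation produces the third digit $c$ directly.

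The genuine gap is the degenerate case that you yourself flag as ``where the real work lies'': $\lambda=0$ with $\lambda\not\equiv_{\beta-1}\Lambda$, where part~(4) of Lemma~\ref{lem:alphabetRestrictions} is vacuous and one must still produce a nonzero digit $a\in\A$ with $a\equiv_{\beta-1}0$. Your plan --- find $b'\in\A'+\A'$ strictly between the endpoints with $b'\equiv_{\beta'-1}0$ and evaluate $\phi'$ on it --- founders precisely on existence: the theorem does not assume $\A[\beta]=\Zbeta$, so nothing guarantees that any positive element of $\A+\A$ is congruent to $0$ modulo $\beta-1$ (Theorem~\ref{thm:representativesInAlphabet}, which would supply representatives of all classes, is proved later and under that stronger hypothesis). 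Moreover, your fallback of looking for a \emph{negative} $b'\in\A'+\A'$ congruent to $0$ is impossible in this case, because $\lambda=0$ forces every element of $\A+\A$ to be nonnegative. The paper closes this case not by exhibiting such a $b'$ but by a pigeonhole argument: let $k$ be the number of residue classes modulo $\beta-1$ occurring in $\A$ and $R\subset\A$ a set of representatives; the $k$ inputs $d=\Lambda+e$, $e\in R$, are positive elements of $\A+\A$, so each $f_d:=\phi(d,\dots,d)\in\A$ is nonzero by part~(1) of Lemma~\ref{lem:alphabetRestrictions} and satisfies $f_d\equiv_{\beta-1}\Lambda+e$ by Theorem~\ref{thm:reprBetaMinusOne}; if no nonzero digit of $\A$ were congruent to $0$, these $k$ values would lie in only $k-1$ classes, forcing $\Lambda+e_1\equiv_{\beta-1}\Lambda+e_2$ and hence $e_1\equiv_{\beta-1}e_2$ for two distinct representatives of $R$ --- a contradiction. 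Your ``residue-counting or descent step'' gestures in this direction, but the contradiction you predict (the conversion being ``compelled to output a forbidden extremal digit'') is not the one that arises, and without the counting argument this case, which is the only part of the proof requiring an idea beyond the endpoint evaluations, remains unproved.
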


\begin{proof}
By Lemma~\ref{lem:parAddAlgForConjugate}, we can assume that the base $\beta$ itself is real and greater than one.\\
Let $\phi$ be a mapping which defines the parallel addition. Since $\{0\}\subsetneq \A$, we know that $\Lambda>0$ or $\lambda<0$. Assume that $\Lambda>0$, the latter one is analogous. By Theorem~\ref{thm:reprBetaMinusOne} and Lemma~\ref{lem:alphabetRestrictions}, $\Lambda\equiv_{\beta-1}\phi(\Lambda,\dots,\Lambda)\in\A$ and $\lambda \neq \phi(\Lambda,\dots,\Lambda)\neq \Lambda$. Hence, $\phi(\Lambda,\dots,\Lambda)$ is a digit of $\A$ which belongs to the same congruence class as $\Lambda$.

If $\lambda\equiv_{\beta-1}\Lambda$, the claim follows, with $c = \phi(\Lambda, \ldots, \Lambda)$.

The case that $\lambda\not\equiv_{\beta-1}\Lambda$ is divided into two sub-cases. If $\lambda\neq 0$, then we have $\lambda\equiv_{\beta-1}\phi(\lambda,\dots,\lambda)\in\A$ and  $\phi(\lambda,\dots,\lambda)\neq \lambda$ again by Theorem~\ref{thm:reprBetaMinusOne} and Lemma~\ref{lem:alphabetRestrictions}, which implies the statement, with $a = \phi(\lambda, \ldots, \lambda)$ and $b = \phi(\Lambda, \ldots, \Lambda)$.

If $\lambda=0$, then all elements of $\A+\Lambda$ are positive. Suppose, for contradiction, that there is no nonzero element of the alphabet~$\A$ congruent to~$0$ modulo $\beta-1$.
Let $k$ be the number of congruence classes occurring in $\A$ and let $R$ be a subset of $\A$ such that there is exactly one representative of each of those $k$ congruence classes.
 For $d\in\Lambda+R$, the value $\phi(d,\dots,d)\in\A$ is not congruent to~$0$, as $\phi(d,\dots,d)\neq\lambda=0$ by Lemma~\ref{lem:alphabetRestrictions} and the congruence class containing zero has only one element in $\A$, by the previous assumption. Therefore, the values $f_j = \phi(d_j,\dots,d_j)\in\A$ for $k$ distinct digits $d_j = \Lambda+e_j \in \Lambda+R$ belong to only $k-1$ congruence classes modulo $\beta-1$.
Hence, there exist two distinct elements $d_1, d_2 \in \Lambda+R$ such that $f_1 \equiv_{\beta-1} f_2$. Due to
$$
f_j = \phi(d_j, \ldots, d_j) \equiv_{\beta-1} d_j = \Lambda + e_j \mathrm{\ for\ each\ } j \, ,
$$
we obtain also $e_1 \equiv_{\beta-1} e_2$ which contradicts the construction of the set $R$.

\end{proof}


\subsection{\texorpdfstring{$\A[\beta]$ closed under addition}{A[beta] closed under addition}}

In order to express the properties of the alphabet $\A$ allowing parallel addition in terms of representatives modulo~$\beta$ and $\beta-1$, we restrict ourselves to alphabets such that $\A[\beta]$ is closed under addition, or even slightly stronger condition that $\A[\beta]=\ZZ[\beta]$. The following theorem summarizes some consequences of these assumptions.

\begin{theorem}
\label{thm:betaMustBeExpanding}
Let $(\beta, \A)$ be a numeration system such that $\A\subset\Zbeta$ and $1\in\A[\beta]$. The following statements hold:
\begin{enumerate}
	\item If $\A[\beta]$ is closed under addition, then $\NN[\beta]\subset\A[\beta]$.
	\item $\A[\beta]$ is additive Abelian group if and only if $\A[\beta]=\ZZ[\beta]$.
	\item If $\NN\subset\A[\beta]$, then $\beta$ is expanding, i.e., $\beta$ is an algebraic number with all conjugates greater than~1 in modulus.
\end{enumerate}
\end{theorem}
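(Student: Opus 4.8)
The plan is to dispatch items (i) and (ii) as formal consequences of the self-similar structure of $\A[\beta]$, and to spend the real effort on item (iii).

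For (i), I first note that $\A[\beta]$ is automatically closed under multiplication by $\beta$: if $z=\sum_{j=0}^n x_j\beta^j$ with $x_j\in\A$, then $\beta z=\sum_{j=0}^n x_j\beta^{j+1}$ is again of the admissible form, since $0\in\A$ supplies the constant digit. Assuming $\A[\beta]$ is closed under addition, repeated addition of $1\in\A[\beta]$ gives $\NN\subset\A[\beta]$; applying multiplication by $\beta^k$ gives $\beta^k\NN\subset\A[\beta]$ for every $k\in\NN$; and a final use of additive closure produces every finite sum $\sum_k c_k\beta^k$ with $c_k\in\NN$, i.e.\ $\NN[\beta]\subset\A[\beta]$. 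For (ii), the direction $\A[\beta]=\Zbeta\Rightarrow$ group is immediate because $\Zbeta$ is a ring. Conversely, an additive group is in particular closed under addition, so (i) yields $\NN[\beta]\subset\A[\beta]$, and closure under taking inverses yields $-\NN[\beta]\subset\A[\beta]$; writing an arbitrary element of $\Zbeta$ as a difference of two elements of $\NN[\beta]$ (splitting each integer coefficient into its positive and negative parts) gives $\Zbeta\subset\A[\beta]$, while $\A\subset\Zbeta$ gives the reverse inclusion.

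For (iii), algebraicity of $\beta$ comes for free: $\NN\subset\A[\beta]\subset\fin{\A}$, so Lemma~\ref{lem:betaMustBeAlgebraic} applies. The main engine is the reduction from the proof of that lemma. Since $\A\subset\Zbeta$ is finite, there are constants $C$ and $d'$ such that every digit equals $\sum_{i=0}^{d'}\alpha_i\beta^i$ with $\alpha_i\in\ZZ$ and $|\alpha_i|\le C$; collecting powers of $\beta$ shows that each $N\in\NN$ can be written as $N=P_N(\beta)$ for some $P_N\in\ZZ[x]$ whose coefficients are bounded in absolute value by a constant $B$ independent of $N$. The minimal polynomial of $\beta$ then divides $P_N(x)-N$, so $P_N(\beta')=N$ holds for \emph{every} conjugate $\beta'$ of $\beta$. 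If some conjugate had $|\beta'|<1$, the estimate $N=|P_N(\beta')|\le B\sum_{k\ge 0}|\beta'|^k=\frac{B}{1-|\beta'|}$ would bound $N$ independently of $N$, a contradiction; hence no conjugate lies strictly inside the unit disk, and $|\beta|>1$ is assumed.

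The main obstacle is excluding conjugates of modulus exactly one, where the same estimate degenerates to $N\le B(\deg P_N+1)$ and merely forces the representations to grow in length. Here I would first observe that such a $\beta'$ cannot be a root of unity: otherwise its minimal polynomial would be cyclotomic and $\beta$, being a conjugate of $\beta'$, would itself lie on the unit circle, contradicting $|\beta|>1$; note also that Lemma~\ref{lem:parAddAlgForConjugate} is unavailable at modulus one, since it presupposes $|\beta'|\neq 1$. Thus $|\beta'|=1$ with the powers $(\beta')^k$ dense on the unit circle, and the plan is to rule out the existence, for all $N\in\NN$, of bounded-coefficient polynomials $P_N$ with $P_N(\beta')=N$ by adapting the modulus-one argument of \cite{minAlph,parAddNS} to $\A\subset\Zbeta$; the governing principle is that a nonzero element of $\Zbeta$ cannot be small at every conjugate simultaneously. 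This modulus-one case is where essentially all the difficulty of the theorem is concentrated.
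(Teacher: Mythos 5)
Your proofs of (i) and (ii), and of the first half of (iii) --- that no conjugate can lie strictly inside the unit disc --- are correct and essentially the same as the paper's (the paper phrases the estimate by applying $\sigma$ to the digits and bounding $\max\{|\sigma(a)|\colon a\in\A\}$, while you expand into bounded integer coefficients; both are the same geometric-series argument). The genuine gap is the case $|\beta'|=1$, which you explicitly leave as a ``plan.'' Observing that $\beta'$ cannot be a root of unity is correct but finishes nothing, and the proposed adaptation of the modulus-one arguments of \cite{minAlph,parAddNS} is problematic in this setting: those arguments exploit the locality of a parallel addition function together with a finite representation of $1$, whereas statement (iii) assumes only $\NN\subset\A[\beta]$ --- no parallel addition is available here. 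Likewise your ``governing principle'' that a nonzero element of $\Zbeta$ cannot be small at every conjugate simultaneously is a norm-type fact for algebraic \emph{integers}, while $\beta$ is only assumed to be an algebraic number, and in any case you never extract a contradiction from it.

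What you are missing is that no analytic or equidistribution input is needed at all: the modulus-one case follows from the first half by a purely algebraic trick (Akiyama--Za\"{\i}mi \cite{Akiyama20131616}), which is what the paper does. Suppose $|\beta'|=1$. Then $\overline{\beta'}=1/\beta'$ lies in $\QQ(\beta')$ and is again a root of the minimal polynomial of $\beta$, since complex conjugation permutes its roots. Now let $\gamma$ be an \emph{arbitrary} conjugate of $\beta$ and let $\sigma'\colon\QQ(\beta')\to\QQ(\gamma)$ be the isomorphism with $\sigma'(\beta')=\gamma$. Because $\sigma'$ fixes $\QQ$, it maps roots of the minimal polynomial lying in $\QQ(\beta')$ to roots; hence $\sigma'(\overline{\beta'})=\sigma'(1/\beta')=1/\gamma$ is also a conjugate of $\beta$. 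By the part you already proved, $|\gamma|\geq 1$ and $|1/\gamma|\geq 1$, so $|\gamma|=1$ for \emph{every} conjugate $\gamma$; taking $\gamma=\beta$ contradicts $|\beta|>1$. So a single conjugate of modulus one forces all conjugates, including $\beta$ itself, onto the unit circle, and this two-line argument is exactly where, contrary to your assessment, the difficulty of the theorem dissolves.
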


\begin{proof}
(1.): Obviously, if $\A[\beta]$ is closed under addition, then $\NN\subset \A[\beta]$. Since $0\in\A$ by our general assumption, also $\beta\cdot\A[\beta]\subset \A[\beta]$. Therefore, $\beta\cdot\NN\subset \A[\beta]$ and the claim $\NN[\beta]\subset\A[\beta]$ follows by induction.

(2.): The assumption that $\A[\beta]$ is closed also under subtraction and (1.) imply that $\Zbeta=\NN[\beta]-\NN[\beta]\subset\A[\beta]$, and obviously $\A[\beta]\subset\Zbeta$. The opposite implication is trivial.

(3.): By Lemma~\ref{lem:betaMustBeAlgebraic}, $\beta$ is an algebraic number since $\A[\beta]\subset\fin{\A}$. The proof that $\beta$ must be expanding is based on the paper of S. Akiyama and T. Zäimi \cite{Akiyama20131616}.
    Let $\beta'$ be an algebraic conjugate of $\beta$ and  $\sigma: \QQ(\beta)\rightarrow \QQ(\beta')$ be the field isomorphism such that $\sigma(\beta)=\beta'$. 
	Since $\NN\subset \A[\beta]$, for all $n \in \NN$ there exist $a_0, \dots , a_N\in\A$ such that
    $$
    \sum_{i=0}^{N}a_i\beta^i =n=\sigma(n)=\sum_{i=0}^{N}\sigma(a_i)(\beta')^i\,.
    $$
    Denoting $\tilde m:= \max\{|\sigma(a)|\colon a\in\A\}$, we have
    \begin{align*}
	    n&=|n|\leq\sum_{i=0}^{N}|\sigma(a_i)|\cdot|\beta'|^i \\
	    &\leq \sum_{i=0}^{\infty}|\sigma(a_i)|\cdot|\beta'|^i \leq \tilde m\sum_{i=0}^{\infty}|\beta'|^i\,.    
    \end{align*}
    As $n$ is arbitrarily large, the sum on the right  side diverges, which implies that $|\beta'|\geq 1$. Thus, all conjugates of $\beta$ are at least one in modulus.

    If the degree of $\beta$ is one, the statement is obvious.  Therefore, we may assume that $\deg \beta \geq 2$.

    Suppose  for contradiction that $|\beta'|=1$ for an algebraic conjugate $\beta'$  of $\beta$. The complex conjugate $\overline{\beta'}$ is also an algebraic conjugate of $\beta$. Take any algebraic conjugate $\gamma$ of $\beta$ and the isomorphism $\sigma': \QQ(\beta')\rightarrow \QQ(\gamma)$ given by $\sigma'(\beta')=\gamma$.
    Now
    \begin{align*}
		    \frac{1}{\gamma}&=\frac{1}{\sigma'(\beta')}=\sigma'\left(\frac{1}{\beta'}\right)=\sigma'\left(\frac{\overline{\beta'}}{\beta'\overline{\beta'}}\right) \\
		    &=\sigma'\left(\frac{\overline{\beta'}}{|\beta'|^2}\right)=\sigma'(\overline{\beta'})\,.    
    \end{align*}
    Hence, $\frac{1}{\gamma}$ is also an algebraic conjugate of $\beta$. Moreover, $\left|\frac{1}{\gamma}\right|\geq 1$ and $|\gamma|\geq 1$, which implies that $|\gamma|=1$. 
    We may choose $\gamma=\beta$, which contradicts $|\beta|>1$. Thus all conjugates of $\beta$ are greater than one in modulus, i.e., $\beta$ is an expanding algebraic number.
\end{proof}

Let us remark that the assumption on $\A[\beta]$ to be closed under addition is satisfied by a wide class of numeration systems.
Namely, if a numeration system $(\beta,\A)$ allows $p$-local parallel addition such that $p=r+1$, i.e.,
there is no anticipation, then $\A[\beta]$ is obviously closed under addition.
Hence, (1.) and (3.) give the following corollary.
\begin{corollary}
	\label{cor:noAnticipationGivesExpanding}
	Let $(\beta, \A)$ be a numeration system such that $1\in\A[\beta]$ and $\A\subset\ZZ[\beta]$. 
	If $(\beta, \A)$ allows parallel addition without anticipation, then $\beta$ is expanding.
\end{corollary}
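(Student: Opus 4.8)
The plan is to reduce the statement to Theorem~\ref{thm:betaMustBeExpanding} by showing that the absence of anticipation forces $\A[\beta]$ to be closed under addition, exactly as the preceding remark suggests. Recall that parallel addition without anticipation means that the defining $p$-local function has $t=0$, so that $p=r+1$ and every output digit satisfies $z_j=\phi(w_j,w_{j-1},\dots,w_{j-r})$, depending only on input digits at positions $\leq j$.

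First I would take two arbitrary elements $x,y\in\A[\beta]$, written over a common range as $x=\sum_{j=0}^{N}x_j\beta^j$ and $y=\sum_{j=0}^{N}y_j\beta^j$ with $x_j,y_j\in\A$ (padding with zero digits). Adding digit-wise produces a $(\beta,\A+\A)$-representation $w=(w_j)_{j\in\ZZ}$ of $x+y$ with $w_j=x_j+y_j\in\A+\A$ for $0\le j\le N$ and $w_j=0$ otherwise; in particular, $w$ is supported only on non-negative positions and has finitely many non-zero digits. Next I would apply the parallel addition conversion $\varphi$ to $w$ and inspect the output $z=\varphi(w)$. For every index $j<0$, the whole window $w_j,w_{j-1},\dots,w_{j-r}$ consists of zeros, whence $z_j=\phi(0,\dots,0)=0$, since the conversion fixes the all-zero window. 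Thus $z$ is supported on non-negative positions as well, and because $\sum_{j}z_j\beta^j=\sum_{j}w_j\beta^j=x+y$ with $z_j\in\A$, we obtain $x+y\in\A[\beta]$. Hence $\A[\beta]$ is closed under addition.

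The only delicate point is this support argument, and it is precisely where $t=0$ is used: with no anticipation, an output digit at a negative position cannot receive any contribution from the non-negative part of the input, so it stays zero. (This is exactly what can fail once anticipation is allowed, since then negative positions may carry a non-zero output.) Having established closedness under addition, the conclusion follows from Theorem~\ref{thm:betaMustBeExpanding}: part~(1), together with $1\in\A[\beta]$, gives $\NN\subset\NN[\beta]\subset\A[\beta]$, and part~(3) then yields that $\beta$ is expanding. Note that $\beta$ being an algebraic number is automatic here, as $\NN\subset\A[\beta]\subset\fin{\A}$ places us in the setting of Lemma~\ref{lem:betaMustBeAlgebraic}.
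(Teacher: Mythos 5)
Your proof is correct and follows exactly the paper's route: the paper justifies this corollary by remarking (just before its statement) that absence of anticipation "obviously" makes $\A[\beta]$ closed under addition, and then invokes parts (1.) and (3.) of Theorem~\ref{thm:betaMustBeExpanding}. Your only addition is to spell out the "obvious" step — the support argument showing output digits at negative positions stay zero when $t=0$ — which is precisely the intended justification.
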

For $\beta$ expanding, Lemma 8 in \cite{akiyama2012} provides a so called weak representation of zero property such that the absolute term is dominant,
and hence parallel addition in the base $\beta$ without anticipation is obtained for some integer alphabet $\A_{int}$ according Theorem 4.3. in \cite{parAddNS}.

In what follows, we assume $\A[\beta]=\ZZ[\beta]$, although the weaker assumption, $\A[\beta]$ being closed under addition, would be sufficient. The reason is that subtraction is also required in applications using parallel addition, such as on-line multiplication and division. Hence the assumption is justified by (2.) of Theorem~\ref{thm:betaMustBeExpanding}. Let us  mention that, for instance, the numeration system $(2,\{0,1,2\})$ allows parallel addition, but $(2,\{0,1,2\})$ is obviously not closed under subtraction.

\begin{theorem}
If a numeration system $(\beta, \A)$ with $\A[\beta]=\ZZ[\beta]$ allows parallel addition, then the alphabet $\A$ contains at least one representative of each congruence class modulo~ $\beta$ and modulo $\beta-1$ in $\Zbeta$.
\label{thm:representativesInAlphabet}
\end{theorem}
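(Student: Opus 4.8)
The plan is to handle the two moduli separately, observing that the congruence modulo $\beta$ is essentially forced by the hypothesis $\A[\beta]=\ZZ[\beta]$ alone, while the congruence modulo $\beta-1$ is where parallel addition enters, through Theorem~\ref{thm:reprBetaMinusOne}.

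For the modulus $\beta$ I would argue directly: take an arbitrary $x\in\ZZ[\beta]=\A[\beta]$ and use the defining representation $x=\sum_{j=0}^{n}x_j\beta^j$ with all $x_j\in\A$. Every term with $j\geq 1$ is divisible by $\beta$ in $\ZZ[\beta]$, so $x\equiv_{\beta}x_0$ with $x_0\in\A$. Hence $\A$ already contains a representative of the class of $x$ modulo $\beta$, and this half needs nothing beyond $\A[\beta]=\ZZ[\beta]$.

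For the modulus $\beta-1$, let $S$ denote the set of congruence classes modulo $\beta-1$ that are hit by $\A$. I would show that $S$ is an additively closed subset of $\ZZ[\beta]/(\beta-1)$ containing $0$. Indeed $0\in S$ because $0\in\A$, and $S$ is closed under addition: given $a,a'\in\A$, the element $b=a+a'$ lies in $\A+\A$, so Theorem~\ref{thm:reprBetaMinusOne} yields $\phi(b,\dots,b)\in\A$ with $\phi(b,\dots,b)\equiv_{\beta-1}b$; thus the class $[a]+[a']=[b]$ is represented in $\A$, i.e.\ lies in $S$. This closure step is precisely where the existence of parallel addition is used, and it is the one genuinely non-routine point of the argument.

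Finally I would exploit that $\beta\equiv 1\pmod{\beta-1}$, hence $\beta^j\equiv 1$ for all $j$. For arbitrary $x\in\ZZ[\beta]=\A[\beta]$ written as $x=\sum_{j=0}^{n}x_j\beta^j$ with $x_j\in\A$, reducing modulo $\beta-1$ gives $[x]=\sum_{j=0}^{n}[x_j]$, a finite sum of elements of $S$. Since $S$ is closed under addition and contains $0$, this sum lies in $S$, so $[x]\in S$. As $x$ was arbitrary, $S$ exhausts $\ZZ[\beta]/(\beta-1)$, which is exactly the assertion that $\A$ contains a representative of each class modulo $\beta-1$. Everything apart from the additive closure of $S$ is bookkeeping with the two representations, so I expect no serious obstacle beyond correctly invoking Theorem~\ref{thm:reprBetaMinusOne} to obtain that closure.
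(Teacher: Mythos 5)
Your proof is correct and follows essentially the paper's own route: the congruence modulo $\beta$ is read off the constant digit of an $\A[\beta]$-representation, and the congruence modulo $\beta-1$ is obtained by folding digits together one at a time via Theorem~\ref{thm:reprBetaMinusOne}, which is exactly the paper's induction recast as additive closure of the set of represented classes in $\ZZ[\beta]/(\beta-1)\ZZ[\beta]$. The only difference is cosmetic: the paper first expands $x$ in powers of $\beta-1$ by the binomial theorem to reduce to an integer constant term before representing it over $\A$, a detour that your direct use of $\A[\beta]=\ZZ[\beta]$ together with $\beta\equiv_{\beta-1}1$ makes unnecessary.
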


\begin{proof}
Let $x=\sum_{i=0}^N x_i \beta^i$ be an element of $\Zbeta$. Since $x_0\in \ZZ \subset \A[\beta]$, we have
$$
x\equiv_\beta x_0=\sum_{i=0}^{n}a_i\beta^i \equiv_\beta a_0 \,, \mathrm{\ where\ } a_i\in \A.
$$
Hence, for any element $x \in \ZZ[\beta]$, there is a digit $a_0\in\A$ such that $x\equiv_\beta a_0$.

In order to prove that there is an element of $\A$ congruent to $x$ modulo $\beta-1$, we use the binomial theorem:
$$
x=\sum_{i=0}^N x_i \beta^i=\sum_{i=0}^N x_i (\beta-1+1)^i=\sum_{i=0}^N x'_j (\beta-1)^j\,,
$$
for some $x'_j\in\ZZ$. Hence
$$
x\equiv_{\beta-1} x'_0=\sum_{i=0}^{n}a_i\beta^i \, ,
$$
for some $a_i\in \A$. We prove by induction with respect to $n$ that $x'_0\equiv_{\beta-1} a$ for some $a\in\A$.
If $n=0$, then $x'_0=a_0 \in \A$. For $n+1$, we have
\begin{align*}
x'_0&=\sum_{i=0}^{n+1}a_i\beta^i    =a_0 + (\beta-1)\sum_{i=0}^{n}a_{i+1}\beta^i+ \sum_{i=0}^{n}a_{i+1}\beta^i \\
&\equiv_{\beta-1}a_0 +a' \equiv_{\beta-1}a \in\A\,,
\end{align*}
where we use the induction assumption $\sum_{i=0}^{n}a_{i+1}\beta^i\equiv_{\beta-1} a'\in\A$ and the statement of Theorem~\ref{thm:reprBetaMinusOne}, i.e, for each digit $b \in\A+\A$ there is a digit  $a\in\A$ such that $b \equiv_{\beta-1} a$.
\end{proof}

\subsection{\texorpdfstring{Lower bound on $\#\A$}{Lower bound on A}}

When deriving the minimal size of alphabets for parallel addition, we assume that the base $\beta$ is an algebraic integer (in this whole subsection), since it enables us to count the number of congruence classes, and hence to provide an explicit lower bound on the size of alphabet allowing parallel addition. In what follows, the monic minimal polynomial of an algebraic integer $\alpha$ is denoted by $m_\alpha$.

Let $d$ be the degree of $\beta$. It is well known that $\Zbeta=\{\sum_{i=0}^{d-1} x_i \beta^i\colon x_i\in\ZZ\}$ if and only if $\beta$ is an algebraic integer. Hence, there is an obvious bijection $\pi:\Zbeta \rightarrow \ZZ^{d}$ given by
\begin{equation*}
    \pi(u)=\vect{u} 
\end{equation*}
for every $u=\sum_{i=0}^{d-1} u_i \beta^i \in \Zbeta$.
Moreover, the additive group $\ZZ^d$ can be equipped with a multiplication such that $\pi$ is a ring isomorphism. In order to do that, we recall the concept of companion matrix.

\begin{definition}
Let  $p(x)=x^d +p_{d-1}x^{d-1}+ \cdots + p_1 x+p_0 \in \ZZ[x]$ be a monic polynomial with integer coefficients, $d\geq 1$. The matrix
\begin{equation*}
    S := \begin{pmatrix}
            0 & 0 & \cdots & 0 & -p_0 \\
            1 & 0 & \cdots & 0 & -p_1 \\
            0 & 1 & \cdots & 0 & -p_2 \\
            \vdots &   & \ddots & & \vdots \\
            0 & 0 & \cdots & 1 & -p_{d-1}
            \end{pmatrix} \in \ZZ^{d\times d}
\end{equation*}
is the \emph{companion matrix} of the polynomial $p$.
\end{definition}

It is well known (see for instance \cite{horn}) that the characteristic polynomial of the companion matrix $S$ is $p$. The matrix $S$ is also root of the polynomial $p$.

The claim of the following theorem, which provides the required multiplication in~$\ZZ^d$, is discussed in \cite{katai}. These topics are also more elaborated in \cite{vu,dp}.
\begin{theorem}
\label{thm:isomorphismWithZd}
Let $\beta$ be an algebraic integer of degree $d\geq 1$ and let $S$ be the companion matrix of $m_\beta$. If the multiplication $\odot_\beta: \ZZ^d \times \ZZ^d \rightarrow \ZZ^d$ is defined by
\[
u \odot_\beta v := \left(\multMat{u}\right)\cdot v 
\]
for all $ u=\vect{u}, v \in \ZZ^d$, then $(\ZZ^d,+,\odot_\beta)$ is a commutative ring which is isomorphic to $\Zbeta$ by the mapping $\pi$.
\end{theorem}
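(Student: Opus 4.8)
The plan is to prove the statement by showing that the already-available bijection $\pi$ is \emph{multiplicative}, that is, that $\pi(xy)=\pi(x)\odot_\beta\pi(y)$ holds for all $x,y\in\Zbeta$. Since the excerpt already records that $\pi$ is an isomorphism of additive groups, establishing this one identity suffices: it lets us transport the commutative ring structure of $\Zbeta$ along $\pi$, so that $(\ZZ^d,+,\odot_\beta)$ automatically inherits associativity, commutativity, distributivity and a unit element $\pi(1)=\vect{e}$ (where $e_0=1$ and $e_i=0$ otherwise), and $\pi$ becomes a ring isomorphism. Thus the whole theorem reduces to verifying the multiplicativity identity.

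The key structural observation is that the companion matrix $S$ is exactly the matrix, in the $\ZZ$-basis $(1,\beta,\dots,\beta^{d-1})$ of $\Zbeta$, of the multiplication-by-$\beta$ map $M_\beta\colon\Zbeta\to\Zbeta$, $x\mapsto\beta x$. First I would check this by direct computation: $M_\beta$ sends $\beta^i\mapsto\beta^{i+1}$ for $0\le i\le d-2$, whose coordinate vector is the next standard basis vector, while $M_\beta(\beta^{d-1})=\beta^{d}=-\sum_{j=0}^{d-1}p_j\beta^j$, using the minimal polynomial relation $m_\beta(\beta)=0$ with $m_\beta(x)=x^d+p_{d-1}x^{d-1}+\dots+p_0$. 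Reading off these images as columns reproduces precisely the matrix $S$ of the definition.

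From here the argument is short. Multiplication by $\beta^i$ is the $i$-fold composition of $M_\beta$, so its matrix in the same basis is $S^i$; by $\ZZ$-linearity, multiplication by an arbitrary $x=\sum_{i=0}^{d-1}u_i\beta^i$ then has matrix $\multMat{u}$. Applying this operator to the coordinate vector $\pi(y)$ of any $y\in\Zbeta$ and using that the matrix of a linear map carries coordinates of the argument to coordinates of the value gives
\[
\pi(xy)=\left(\multMat{u}\right)\pi(y)=\pi(x)\odot_\beta\pi(y)\,,
\]
which is exactly the multiplicativity identity; combined with the first paragraph this completes the proof.

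I expect the only point needing real care to be the basis-matrix computation of the second paragraph: one must track the ordering and signs of the companion matrix so that the columns match the stated definition of $S$, and note that the reduction of $\beta^d$ is consistent with the fact recalled in the text that $S$ is a root of $m_\beta$ (equivalently $m_\beta(M_\beta)=M_{m_\beta(\beta)}=0$). Everything else is a formal transport of structure along the bijection $\pi$, requiring no further computation.
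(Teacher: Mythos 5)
Your proposal is correct, but note that the paper itself gives no proof of this theorem at all: it states the claim and defers to the literature (the references cited just before the theorem), so there is no internal argument to compare yours against. Your argument is the standard one and it is complete: the identification of the companion matrix $S$ with the matrix of the multiplication-by-$\beta$ map in the basis $(1,\beta,\dots,\beta^{d-1})$ is verified correctly (the columns, including the last column $(-p_0,\dots,-p_{d-1})^T$ coming from $m_\beta(\beta)=0$, match the paper's definition of $S$), the step from $M_\beta$ to $M_x=\sum_{i=0}^{d-1}u_iM_\beta^i$ with matrix $\sum_{i=0}^{d-1}u_iS^i$ is sound because $\Zbeta$ is a free $\ZZ$-module of rank $d$ precisely when $\beta$ is an algebraic integer of degree $d$ (the hypothesis under which the paper introduces $\pi$), and the transport-of-structure argument legitimately carries associativity, commutativity, distributivity, and the unit $\pi(1)=(1,0,\dots,0)^T$ from $\Zbeta$ to $(\ZZ^d,+,\odot_\beta)$, since $\odot_\beta$ is exactly the pushforward of multiplication along the additive bijection $\pi$. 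The one point worth making explicit if this were written out in full is the sentence asserting that $u\odot_\beta v=\pi\left(\pi^{-1}(u)\,\pi^{-1}(v)\right)$ for \emph{all} $u,v\in\ZZ^d$, which is what turns your multiplicativity identity into the statement that $\odot_\beta$ itself (as defined by the matrix formula, prior to any reference to $\Zbeta$) satisfies the ring axioms; this follows immediately from surjectivity of $\pi$, and you do invoke bijectivity, so the gap is only one of emphasis, not of substance.
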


One of the consequences is the following lemma. Although it is a known result, we include its proof here, to be more self-contained. Let us recall that for a non-singular integer matrix $M\in\ZZ^{d\times d}$, two vectors $x,y \in \ZZ^d$ are \emph{congruent modulo $M$ in $\ZZ^d$}, denoted by $x\equiv_M y$, if $x-y \in M\ZZ^d$.

\begin{lemma}
Let $\beta$ be an algebraic integer of degree $d$ and $\alpha\in\Zbeta$ be such that $\deg \alpha = \deg\beta$. The number of congruence classes modulo $\alpha$ in $\Zbeta$ is $|m_\alpha(0)|$.
\label{lem:numbCongruenceClasses}
\end{lemma}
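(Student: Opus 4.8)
The plan is to translate the counting of congruence classes modulo $\alpha$ in $\Zbeta$ into a counting of congruence classes modulo a matrix in $\ZZ^d$, using the ring isomorphism $\pi$ from Theorem~\ref{thm:isomorphismWithZd}. First I would observe that $\pi$ maps the ideal $\alpha\Zbeta$ onto the sublattice $M_\alpha\ZZ^d$, where $M_\alpha:=\multMat{\alpha}$ is the matrix representing multiplication by $\alpha$ (with $\alpha=\vect{\alpha}$); this is because $\pi(\alpha\varepsilon)=\alpha\odot_\beta\varepsilon=M_\alpha\cdot\pi(\varepsilon)$ for every $\varepsilon\in\Zbeta$. Consequently $\gamma\equiv_\alpha\delta$ in $\Zbeta$ if and only if $\pi(\gamma)\equiv_{M_\alpha}\pi(\delta)$ in $\ZZ^d$, so the number of congruence classes modulo $\alpha$ equals the index $[\ZZ^d:M_\alpha\ZZ^d]$.

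Next I would invoke the standard fact that for a non-singular integer matrix $M\in\ZZ^{d\times d}$, the quotient $\ZZ^d/M\ZZ^d$ is finite of cardinality $|\det M|$ (via Smith normal form, or the volume interpretation of the determinant). Thus the number of congruence classes is $|\det M_\alpha|$, and it remains to identify this determinant with $|m_\alpha(0)|$. For this, note that $M_\alpha=\multMat{\alpha}$ is a polynomial in the companion matrix $S$ of $m_\beta$; since the eigenvalues of $S$ are exactly the conjugates $\beta=\beta^{(1)},\dots,\beta^{(d)}$ of $\beta$, the eigenvalues of $M_\alpha$ are $\alpha^{(1)},\dots,\alpha^{(d)}$, the conjugates of $\alpha$. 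Hence $\det M_\alpha=\prod_{k=1}^d \alpha^{(k)}$, which is (up to sign) the norm of $\alpha$, i.e.\ the constant term of the minimal polynomial of $\alpha$ when $\deg\alpha=d$. Taking absolute values gives $|\det M_\alpha|=|m_\alpha(0)|$.

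The main obstacle is the final step, where the hypothesis $\deg\alpha=\deg\beta=d$ is essential: only then is $M_\alpha$ non-singular and does the product of its eigenvalues coincide with $|m_\alpha(0)|$ rather than with a power of a lower-degree minimal polynomial's constant term. If $\deg\alpha<d$, then $\alpha$ would satisfy a polynomial of degree $<d$ and $M_\alpha$ could have repeated eigenvalue structure forcing $\det M_\alpha=\pm m_\alpha(0)^{d/\deg\alpha}$, so the clean formula fails; I would therefore make explicit use of the assumption to guarantee that the characteristic polynomial of $M_\alpha$ equals $m_\alpha$ and that $\det M_\alpha=(-1)^d m_\alpha(0)\neq 0$. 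A secondary technical point to handle carefully is verifying that $\pi$ carries the ideal generated by $\alpha$ exactly onto $M_\alpha\ZZ^d$ (both inclusions), which follows directly from $\pi$ being a ring isomorphism sending $\alpha$ to $\pi(\alpha)$ and multiplication to $\odot_\beta$.
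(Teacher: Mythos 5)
Your proposal is correct and follows essentially the same route as the paper's proof: both transfer the congruence $\equiv_\alpha$ through the ring isomorphism $\pi$ to the matrix congruence modulo $S_\alpha=\multMat{a}$ (your $M_\alpha$), count classes as $|\det S_\alpha|$, and identify this determinant with $|m_\alpha(0)|$ by observing that the eigenvalues of a polynomial in the companion matrix $S$ are exactly the conjugates of $\alpha$ (the paper spells this out via diagonalization $S=P^{-1}DP$, your spectral-mapping phrasing is the same argument). Your explicit remark on why $\deg\alpha=\deg\beta$ is needed matches the role this hypothesis plays in the paper.
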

\begin{proof}
The number $\alpha$ is an algebraic integer, since it is well known that sum and product of algebraic integers is an algebraic integer.
Let $\gamma, \delta\in\Zbeta$ and let $S$ be the companion matrix of the minimal polynomial $m_\beta$ of the algebraic integer $\beta$. Let $\pi(\alpha) =\vect{a}$, with $\alpha = \sum_{i=0}^{d-1} a_i \beta^i$.
If we set $S_\alpha:=\multMat{a}$, then the congruences $\equiv_\alpha$ in $\ZZ[\beta]$ and $\equiv_{S_\alpha}$ in $\ZZ^d$ fulfill:
\begin{align*}
\gamma\equiv_\alpha \delta &\iff \exists \varepsilon \in \Zbeta \colon \gamma-\delta= \alpha \varepsilon \\
&\iff \exists z=\pi(\varepsilon) \in \ZZ^d \colon \pi(\gamma)-\pi(\delta)= \\
& \qquad \quad= \pi(\gamma-\delta)= \pi(\alpha)\odot_\beta z = S_\alpha \cdot z \\
&\iff \pi(\gamma) \equiv_{S_\alpha} \pi(\delta)\,.
\end{align*}
Thus, the number of congruence classes modulo $\alpha$ in $\Zbeta$ equals the number of congruence classes modulo $S_{\alpha}$ in $\ZZ^d$, which is known to be $|\det S_{\alpha}|$.

To show that $|\det S_{\alpha}|=|m_\alpha(0)|$, we proceed in the following way: the characteristic polynomial of the companion matrix $S$ is the same as the minimal polynomial of $\beta$.
Since minimal polynomials have no multiple roots, $S$ is diagonalizable over $\CC$, i.e., $S=P^{-1}DP$ where $D$ is a diagonal matrix with the conjugates of $\beta$ on the diagonal, and $P$ is a non-singular complex matrix. The matrix $S_\alpha$ is also diagonalized by $P$:
\begin{align*}
	S_\alpha&=\sum_{i=0}^{d-1} a_i S^i= \sum_{i=0}^{d-1} a_i \left(P^{-1}DP\right)^i \\
	&=P^{-1}\underbrace{\left(\sum_{i=0}^{d-1} a_i D^i\right)}_{D_\alpha}P\,.
\end{align*}
It is known (see for instance \cite{chapman}) that if $\sigma:\QQ(\beta)\rightarrow\QQ(\beta')$ is a field isomorphism and $\alpha\in\QQ(\beta)$, then $\sigma(\alpha)$ is a conjugate of $\alpha$, and we obtain all conjugates of $\alpha$ in this way.
Since $\alpha=\sum_{i=0}^{d-1} a_i \beta^i$, $\deg \alpha = \deg\beta$ and  $D$ has conjugates of $\beta$ on the diagonal, the diagonal elements of the diagonal matrix $D_\alpha$ are precisely all conjugates of $\alpha$.
Hence, $|\det S_\alpha|$ equals absolute value of the product of all conjugates of $\alpha$, which is $|m_\alpha(0)|$.
\end{proof}


Finally, we put together the fact that the alphabet~$\A$ for parallel addition in base~$\beta$ contains all representatives modulo~$\beta$ and modulo $\beta-1$, the derived formula for the number of congruence classes, and also specific restrictions on alphabets for parallel addition in a base with some positive real conjugate.


\begin{theorem}
\label{thm:lowerBoundAlphabet}
Let $(\beta,\A)$ be a numeration system  such that $\beta$ is an algebraic integer and $\A[\beta]=\ZZ[\beta]$. If the numeration system $(\beta, \A)$ allows parallel addition, then
$$
\#\A \geq \max \{|m_\beta(0)|, |m_\beta(1)|\}\,.
$$
Moreover, if $\beta$ has a positive real conjugate, then
$$
\#\A \geq \max \{|m_\beta(0)|, |m_\beta(1)|+2\}\,.
$$
\end{theorem}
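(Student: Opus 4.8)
The plan is to convert the structural result of Theorem~\ref{thm:representativesInAlphabet} --- that $\A$ meets every congruence class modulo $\beta$ and modulo $\beta-1$ --- into explicit cardinality bounds by counting those classes via Lemma~\ref{lem:numbCongruenceClasses}, and then, for the sharper ``moreover'' part, to extract two extra digits in the $\beta-1$ count using Theorem~\ref{thm:betaPositiveImpliesTwoMoreElements}.

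First I would count the classes. Since $\beta$ is an algebraic integer, both $\beta$ and $\beta-1$ lie in $\Zbeta$ and generate the same field $\QQ(\beta)$, so $\deg(\beta-1)=\deg\beta=d$; hence Lemma~\ref{lem:numbCongruenceClasses} applies with $\alpha=\beta$ and with $\alpha=\beta-1$. For $\alpha=\beta$ it gives $|m_\beta(0)|$ classes. For $\alpha=\beta-1$ the minimal polynomial is the shift $m_{\beta-1}(x)=m_\beta(x+1)$ (monic, irreducible, vanishing at $\beta-1$), so $m_{\beta-1}(0)=m_\beta(1)$ and there are $|m_\beta(1)|$ classes; note $m_\beta(1)\neq 0$ because $\beta\neq 1$. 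By Theorem~\ref{thm:representativesInAlphabet}, $\A$ contains a representative of each class modulo $\beta$ and of each class modulo $\beta-1$, whence $\#\A\geq|m_\beta(0)|$ and $\#\A\geq|m_\beta(1)|$, and taking the maximum yields the first inequality.

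For the ``moreover'' part, assuming $\beta$ has a positive real conjugate, I would set $\lambda=\min\A$ and $\Lambda=\max\A$ and invoke Theorem~\ref{thm:betaPositiveImpliesTwoMoreElements}, splitting on whether $\lambda\equiv_{\beta-1}\Lambda$. In the congruent case the theorem produces $c$ with $\lambda\neq c\neq\Lambda$ and $\lambda\equiv_{\beta-1}c\equiv_{\beta-1}\Lambda$, so one class already holds the three distinct digits $\lambda,c,\Lambda$, while each of the remaining $|m_\beta(1)|-1$ classes contributes at least one digit, giving $\#\A\geq 3+(|m_\beta(1)|-1)=|m_\beta(1)|+2$. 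In the non-congruent case the theorem yields $a\neq\lambda$ with $a\equiv_{\beta-1}\lambda$ and $b\neq\Lambda$ with $b\equiv_{\beta-1}\Lambda$; since the classes of $\lambda$ and $\Lambda$ are distinct, they are disjoint and together contain the four distinct digits $\lambda,a,\Lambda,b$, while the other $|m_\beta(1)|-2$ classes contribute at least one each, again giving $\#\A\geq 4+(|m_\beta(1)|-2)=|m_\beta(1)|+2$. Combining with the $|m_\beta(0)|$ bound from the first part yields the claimed maximum.

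The class counting is routine; the step requiring the most care is the bookkeeping in the second part, where one must check that the extra digits supplied by Theorem~\ref{thm:betaPositiveImpliesTwoMoreElements} are genuinely distinct from each other and from the chosen representatives of the other classes, so nothing is double-counted. This is exactly what the two-case split and the disjointness of the classes of $\lambda$ and $\Lambda$ (in the non-congruent case) guarantee, and it is also why $|m_\beta(1)|\geq 2$ in that case, keeping the count $|m_\beta(1)|-2$ nonnegative.
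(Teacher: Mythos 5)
Your proposal is correct and follows essentially the same route as the paper: Theorem~\ref{thm:representativesInAlphabet} plus Lemma~\ref{lem:numbCongruenceClasses} (with the shift $m_{\beta-1}(x)=m_\beta(x+1)$) for the first bound, and the case split from Theorem~\ref{thm:betaPositiveImpliesTwoMoreElements} for the additional $+2$. Your write-up merely makes explicit the class-counting bookkeeping that the paper leaves implicit.
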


\begin{proof}
By Theorem~\ref{thm:representativesInAlphabet}, the alphabet~$\A$ for parallel addition must contain all representatives modulo~$\beta$ and modulo $\beta-1$ in~$\ZZ[\beta]$. The numbers of congruence classes are $|m_\beta(0)|$ and $|m_{\beta-1}(0)|$, respectively, by Lemma~\ref{lem:numbCongruenceClasses}. Obviously, $m_{\beta-1}(x) = m_\beta (x+1)$. Thus $m_{\beta-1}(0) = m_\beta (1)$.

Theorem~\ref{thm:betaPositiveImpliesTwoMoreElements} ensures that if the minimal and maximal element of $\A$ are congruent modulo $\beta-1$, then there are at least three digits of $\A$ in this class. Otherwise, the class of the minimal and also the class of the maximal element of $\A$ have at least two elements. Both lead to the conclusion that $\#\A \geq |m_\beta(1)|+2$.
\end{proof}
We remark that the obtained bound is basically the same as the one for integer alphabets in \cite{minAlph}.

\section{Necessary and sufficient condition on bases for parallel addition}
\label{sec:condBase}
P.~Kornerup \cite{kornerup} proposed a more general concept of parallel addition called $k$-block parallel addition. The idea is that blocks of $k$ digits are considered as one digit in the new numeration system with base being the $k$-th power of the original one.

\begin{definition}
For a positive integer $k$, the numeration system $(\beta, \A)$ allows \emph{$k$-block parallel addition} if there exists parallel addition in $(\beta^k, \A_{(k)})$ , where $\A_{(k)}=\{a_{k-1}\beta^{k-1}+\dots+a_1 \beta+a_0 \colon a_i\in \A\}$.
\end{definition}
We remark that 1-block parallel addition is the same as parallel addition. C.~Frougny, P.~Heller, E.~Pelantov\'a and M.~Svobodov\'a \cite{kBlock} showed that for a given base $\beta$, there exists an integer alphabet $\A$ such that $(\beta, \A)$ allows parallel addition if and only if $\beta$ is an algebraic number with no conjugates of modulus~$1$. Moreover, it was shown that the concept of $k$-block parallel addition does not enlarge the class of basis allowing parallel addition in case of integer alphabets.
We prove an extension of these statements also to alphabets being subsets of $\Zbeta$ in Theorem~\ref{thm:kblock}.
Although the $k$-block concept does not enlarge the class of bases for parallel addition, it might decrease the minimal size of the alphabet.

\begin{theorem}
\label{thm:kblock}
Let $\beta$ be a complex number such that $|\beta|>1$. There exists an alphabet $\A \subset \ZZ[\beta]$ with $0\in \A$ and $1\in\fin{\A}$ which allows $k$-block parallel addition in $(\beta,\A)$ for some $k \in \NN$, if and only if $\beta$ is an algebraic number with no conjugate of modulus~$1$. If this is the case, then there also exists an alphabet in $\ZZ$ allowing 1-block parallel addition in base~$\beta$.
\end{theorem}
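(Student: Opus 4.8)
The plan is to treat the two directions asymmetrically. The backward implication, together with the final ``moreover'' claim, is essentially free: if $\beta$ is an algebraic number with no conjugate of modulus one, then the integer-alphabet result of \cite{kBlock} already produces an alphabet $\A\subset\ZZ\subset\ZZ[\beta]$ that allows $1$-block parallel addition in base $\beta$. Such an $\A$ contains $0$ and has $1\in\A\subset\fin{\A}$, and $1$-block parallel addition is exactly $k$-block parallel addition with $k=1$; hence the same alphabet witnesses both the existence statement and the ``moreover'' part at once. So all the real work lies in the forward direction.

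For that direction I would assume that some finite $\A\subset\ZZ[\beta]$ with $0\in\A$ and $1\in\fin{\A}$ allows $k$-block parallel addition, i.e.\ that parallel addition exists in $(\beta^k,\A_{(k)})$. First I would show $\beta$ is algebraic. The key observation is that regrouping a $(\beta,\A)$-representation into blocks of length $k$ gives $\fin[\beta^k]{\A_{(k)}}=\fin{\A}$; since parallel addition makes $\fin[\beta^k]{\A_{(k)}}$ closed under addition and $1\in\fin{\A}$, Corollary~\ref{cor:betaMustBeAlgebraic} applied in base $\beta$ yields that $\beta$ is algebraic.

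It then remains to exclude conjugates of modulus one. Suppose, for contradiction, that $\beta'$ is a conjugate of $\beta$ with $|\beta'|=1$, let $\sigma\colon\QQ(\beta)\to\QQ(\beta')$ be the field isomorphism with $\sigma(\beta)=\beta'$, and put $\tilde m=\max\{|\sigma(a)|\colon a\in\A\}$. The argument is a two-sided length estimate for representations of the integers $n\in\NN\subset\fin{\A}$, pushed forward by $\sigma$. For the lower bound, writing any $\A_{(k)}$-representation of $n$ with $L$ blocks as $n=\sum_{l}c_l(\beta^k)^l$ and applying $\sigma$, the equality $n=\sigma(n)$ together with $|\beta'^k|=|\beta'|^k=1$ and $|\sigma(c_l)|\le k\tilde m$ forces $n=|\sigma(n)|\le L\,k\tilde m$, so every representation of $n$ has length at least $n/(k\tilde m)$, growing linearly in $n$. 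For the upper bound I would exploit the locality of parallel addition: a $p$-local conversion enlarges the support of a representation only by a constant number of positions, so computing $n$ by the binary doubling scheme (repeatedly forming $x+x$ and adding the powers selected by the binary digits of $n$) uses $O(\log n)$ conversions and produces a representation of $n$ of length $O(\log n)$. Comparing the two bounds gives $n\le O(\log n)$, impossible for large $n$; hence no conjugate has modulus one.

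The main obstacle is precisely this upper bound, namely verifying that a representation of $n$ of length $O(\log n)$ really can be built: one must track that each parallel conversion expands the support by at most $r+t$ positions (as $\phi$ fixes the all-zero window), that the digit-wise sums arising in the doublings and in combining partial sums stay inside $\A_{(k)}+\A_{(k)}$, and that $O(\log n)$ such steps suffice. This is the technique of \cite{kBlock}; the passage to a non-integer alphabet $\A_{(k)}\subset\ZZ[\beta]$ affects only the constants $\tilde m$ and the block length $k$, not the structure of the estimate, which is why the integer-alphabet proof transfers.
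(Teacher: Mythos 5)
Your proposal is correct, and it handles the easy direction, the ``moreover'' claim, and the algebraicity of $\beta$ exactly as the paper does (the known integer-alphabet result plus Corollary~\ref{cor:betaMustBeAlgebraic}; you additionally spell out the regrouping identity $\fin[\beta^k]{\A_{(k)}}=\fin{\A}$, which the paper uses silently). Where you genuinely diverge is in excluding a conjugate $\gamma$ with $|\gamma|=1$. The paper's proof is an extremal argument: using the representation $1=\sum_{j=-r}^{s}u_j\beta^j$ and the fact that $\Re{\gamma^j}>\frac{1}{2}$ for infinitely many $j$, it builds an element supported in a window of length $kN$ whose $\sigma$-image exceeds $2S$ in modulus, where $S$ bounds the $\sigma$-image of any block of length $pk$; taking $x$ with $|\sigma(x)|=T$ maximal over that window and doubling it \emph{once}, $p$-locality splits the output for $x+x$ into the original window (bounded by $T$) plus two boundary blocks (each bounded by $S$), whence $2T\le T+2S<2T$, a contradiction. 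You instead run a growth-rate count: since $|\gamma|=1$, applying $\sigma$ to any representation of $n\in\NN$ gives $n\le L\,k\tilde m$, where $L$ is the number of nonzero blocks and $\tilde m=\max\{|\sigma(a)|\colon a\in\A\}$, so every representation of $n$ needs at least $n/(k\tilde m)$ nonzero blocks; on the other hand, binary doubling with $O(\log n)$ parallel conversions---each enlarging the support by at most $r+t$ positions, since $\phi(0,\dots,0)=0$ is forced by the finite-support condition in Definition~\ref{def:digitSetConversion}---produces a representation of $n$ with only $O(\log n)$ nonzero blocks, which is impossible for large $n$. Your route avoids both the equidistribution-type fact about $\Re{\gamma^j}$ and the extremal element, so its ingredients are more elementary; the cost is iterating the algorithm $O(\log n)$ times with the attendant support bookkeeping, whereas the paper invokes it a single time and stays close to the original argument of \cite{kBlock}. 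Both proofs ultimately rest on the same pillars: the isomorphism $\sigma$, the availability of a finite representation of $1$, and locality-based control of support growth.
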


\begin{proof}
If the base $\beta$ is an algebraic number with no conjugates of modulus~1, then \cite{parAddNS} provides $a\in\NN$ such that the alphabet $\A=\{-a, -a-1, \dots, 0, \dots, a-1, a\}$ allows 1-block parallel addition. Obviously, $0\in\A\subset\ZZ$ and $1\in\fin{\A}$.

For the opposite implication, $\beta$ is an algebraic number by Corollary~\ref{cor:betaMustBeAlgebraic}. Let $r,s\in\NN$ and $u_{-r}, \dots, u_{s}\in \A$ be such that
\begin{equation}\label{jednicka}
    \sum_{j=-r}^s u_j \beta^j=1\in \fin{\A} \, .
\end{equation}

Now we slightly modify the proof from \cite{kBlock} to show that if $\beta$ has a conjugate of modulus~1, then there is no alphabet in $\Zbeta$ allowing block parallel addition.
Let $\gamma$ be a conjugate of $\beta$ such that $|\gamma|\,=1$ and let $\sigma:\QQ(\beta)\rightarrow \QQ(\gamma)$ be the field isomorphism such that $\sigma(\beta)=\gamma$. Let $\A':=\{\sigma(a) \colon a\in\A\}$.
Assume, for contradiction, that there are $k,p\in\NN$ such that there exists $p$-local function performing $k$-block parallel addition on $(\beta, \A)$. We denote
$$
S:=\max\left\{\left|\sum_{j=0}^{pk-1} a_j  \gamma^j\right| \colon a_j \in \A'\right\}\,.
$$
Since there are infinitely many $j$ such that $\Re{\gamma^j} >\frac{1}{2}$, there exists $N>p$ and indices $0\leq j_1<\dots<j_m\leq kN-1$ satisfying $j_{i+1}-j_i>r+s$ for all $i\in\{1,\dots, m-1\}$ such that
$$
2 S<\Re{\sum_{j=0}^{kN-1} \varepsilon_j  \gamma^j } \leq \left|\sum_{j=0}^{kN-1} \varepsilon_j  \gamma^j \right|\,,
$$
where $\varepsilon_j=1$ if $j=j_i$ for some $i\in\{1, \dots,m\}$ and $\varepsilon_j=0$ otherwise. By using the representation \eqref{jednicka} of~1 and the fact that $j_{i+1}-j_i>r+s$, we have $\sum_{j=0}^{kN-1} \varepsilon_j  \gamma^j=\sum_{i=1}^{m} 1\cdot \gamma^{j_i}= \sum_{j=-r}^{kN-1+s} v'_j \gamma^j$ for some $v'_j\in\A'$. Hence
$$
2 S<T:=\max \left\{\left|\sum_{j=-r}^{kN-1+s} a_j  \gamma^j\right| \colon a_j \in \A'\right\}\,.
$$
Let $x'=\sum_{j=-r}^{kN-1+s} \sigma(x_j)  \gamma^j$, where $x_{-r}, \dots, x_{kN-1+s}\in \A$, be such that $|x'|=T$. Let $x=\sum_{j=-r}^{kN-1+s} x_j  \beta^j$, i.e., $x'=\sigma(x)$.
Since there is a $k$-block parallel addition in $(\beta,\A)$, we have
$$
x+x=\sum_{j=kN+s}^{k(N+p)-1+s} z_j  \beta^j +\sum_{j=-r}^{kN-1+s} z_j  \beta^j +\sum_{j=-kp-r}^{-r-1} z_j  \beta^j\,,
$$
where $z_j\in\A$. We denote $z'_j:=\sigma(z_j)$. Hence, $z'_j \in \A'$, and
\begin{align*}
    &|x'| + 2 S   < |x'|+|x'|= |x'+x'| \\
        &   \leq \left|\sum_{j=kN+s}^{k(N+p)-1+s} z'_j  \gamma^j\right| +\left|\sum_{j=-r}^{kN-1+s} z'_j  \gamma^j\right| +\left|\sum_{j=-kp-r}^{-r-1} z'_j  \gamma^j\right| \\
        &   \leq |\gamma^{kN+s}|S+|x'|+|\gamma^{-kp-r}|S= 2S +|x'|\,,
\end{align*}
which is a contradiction.
\end{proof}


\section{Conclusion}

We have shown that the necessary conditions on $\beta$ and $\A$ allowing parallel addition that were known for alphabets consisting of consecutive integers can be largely extended to alphabets $\A$ being subsets of $\Zbeta$.

During our investigation, we also considered even more general case: $\beta\in\Zomega$ and $\A\subset\Zomega$, where $\omega$ is an algebraic number.
Clearly, $\Zbeta\subset\Zomega$, but if $\Zbeta\subsetneq\Zomega$, then congruences modulo $\beta$ or $\beta-1$ behave differently in $\Zomega$ than in $\Zbeta$.
Due to this fact, Theorem~\ref{thm:representativesInAlphabet} does not hold in the $\Zomega$ setting, see a counterexample: let $\omega=\frac{1}{2}\sqrt{5}+\frac{1}{2}$, $\beta=-2\omega+1=-\sqrt{5}$ and $\A=\{-2, -1,0,1,2,3\}\subset\Zomega$. This numeration system allows parallel addition, but $-2\equiv_{\beta-1} 0\equiv_{\beta-1}2$ and $-1\equiv_{\beta-1}1\equiv_{\beta-1}3$, with $\equiv_{\beta-1}$ in $\Zomega$. The minimal polynomial of $\beta$ is $x^2-5$, thus there are six congruence classes modulo $\beta-1$ in $\Zomega$, i.e., $\A$ does not contain representatives of all congruence classes. See \cite{dp} for further elaboration.

We conjecture the converse of Corollary~\ref{cor:noAnticipationGivesExpanding}, that is: 
let $(\beta, \A)$ be a numeration system such that $1\in\A[\beta]$ and $\A\subset\ZZ[\beta]$. Let $(\beta,\A)$ allow parallel addition by $p$-local function with $p=r+t+1$.
If $\beta$ is expanding, then the parallel addition is without anticipation, i.e., $t=0$.

\section{Acknowledgments}
This work was supported by GA\v CR 13-03538S and  SGS 17/193/OHK4/3T/14. The author thanks to Milena Svobodov\'a and Edita Pelantov\'a for fruitful discussions.


\bibliographystyle{actapoly}

\bibliography{literatura}

\end{document}